\def\a={{\buildrel a \over =}}
\def\na={{\buildrel a \over \neq}}
\def\CC{{\mathbb C}}
\def\PP{{\mathbb P}}
\def\Fcal{{\mathcal F}}
\def\Lcal{{\mathcal L}}
\def\Pcal{{\mathcal P}}
\def\Ocal{{\mathcal O}}
\def\Ycal{{\mathcal Y}}
\def\Xcal{{\mathcal X}}
\def\GG{{\mathbb G}}
\def\QQ{{\mathbb Q}}
\def\RR{{\mathbb R}}
\def\ZZ{{\mathbb Z}}
\def\X{{\mathcal X}}
\def\Y{{\mathcal Y}}
\def\C{{\mathcal C}}
\def\Z{{\mathcal Z}}
\def\Xs{{\Xcal}^{\star}}
\newtheorem{theorem}{Theorem}[section]
\newtheorem{lemma}[theorem]{Lemma}
\newtheorem{proposition}[theorem]{Proposition}
\newtheorem{corollary}[theorem]{Corollary}
\newtheorem{definition-lemma}[theorem]{Definition-Lemma}
\theoremstyle{definition}
\theoremstyle{remark}
\begin{document}

\title[The Rank-One Limit of the Fourier-Mukai Transform]
{The Rank-One Limit of the Fourier-Mukai Transform}
\author{Gerard van der Geer $\,$}
\address{Korteweg-de Vries Instituut, Universiteit van
Amsterdam, Postbus 94248, 1090 GE Amsterdam,  The Netherlands}
\email{G.B.M.vanderGeer@uva.nl}
\author{$\,$ Alexis Kouvidakis}
\address{Department of Mathematics, University of Crete, 
GR-71409 Heraklion, Greece}
\email{kouvid@math.uoc.gr}
\subjclass{14C25,14H40}
\begin{abstract}
We give a formula for the specialization of the Fourier-Mukai
transform on a semi-abelian variety of torus rank $1$.
\end{abstract}

\maketitle

\begin{section}{Introduction}\label{sec:intro}
Let $\pi: {\Xs}\to S$ be a semi-abelian variety of 
relative dimension $g$ over the spectrum $S$ of a 
discrete valuation ring $R$ with algebraically closed residue field $k$
such that the generic fibre $X_{\eta}$ is a principally polarized
abelian variety. We assume that ${\Xs}$ is contained in a
complete rank-one degeneration ${\X}$. In particular,
the special fibre $X_0$ of ${\X}$ is a complete variety over $k$ containing
as an open part the total space of the $\GG_m$-bundle associated to a line bundle $J\to B$
over a $g-1$-dimensional abelian variety~$B$. The normalization
$\nu: {\PP}\to X_0$ of $X_0$ can be identified with the 
${\PP}^1$-bundle over $B$ associated to $J$ and $X_0$ is 
obtained by identifying the zero-section
of ${\PP}\cong B$ with the infinity-section of ${\PP}$ by a translation.
Moreover, $X_0$ is provided with a theta divisor that is the specialization
of the polarization divisor on the generic fibre.

If $c_{\eta}$ is an algebraic cycle on $X_{\eta}$  we can 
 take the Fourier-Mukai transform
$\varphi_{\eta}:=F(c_{\eta})$ and consider the limit cycle (specialization)
$\varphi_0$ of $\varphi_{\eta}$. A natural question is: 
What is the limit $\varphi_0$ of $\varphi_{\eta}$?

If $q:{\PP}\to B$ denotes the natural projection of the ${\PP}^1$-bundle, 
the Chow ring of ${\PP}$ is the extension ${\rm CH}^*(B)[\eta]/(\eta^2-\eta \cdot q^*c_1(J))$
with $\eta =c_1 (O_{\PP}(1))$. We consider now
cycles with rational coefficients.
We denote by $c_0$ the specialization of the cycle $c_{\eta}$ on $X_0$. 
We can write  $c_0$ as $\nu_*(\gamma)$ with $\gamma
=  q^*z+ q^*w\cdot \eta$.

\begin{theorem} \label{th: aeFT}
 Let $c_{\eta}$ be a cycle on $X_{\eta}$ with  
$c_0=\nu_*(q^*z+q^*w\cdot \eta)$.
The limit $\varphi_0$ 
of the Fourier-Mukai  transform $\varphi_{\eta}=F(c_{\eta})$ 
is given by  $\varphi_0=\nu_*(  q^*a+q^* b \cdot \eta)$
with 
$$
a= F_B(w)+ \sum_{n=0}^{2g-2}\sum_{m=0}^{n} \frac{(-1)^m}{(n+2)!}\, 
F_B[( z+w\cdot c_1(J)) \cdot  c_1^{m}(J)] \cdot c_1^{n-m+1}(J)
$$
and
$$
b= \sum_{n=0}^{2g-2}\sum_{m=0}^{n} \frac{(-1)^{m}}{(n+2)!}\,  
F_B[(( (-1)^{n+1}-1)z - w\cdot c_1(J))  
 \,  \cdot c_1^{m}(J)] \cdot c_1^{n-m}(J) \,   ,
$$
where $F_B$ is the Fourier-Mukai transform of the abelian variety $B$.
\end{theorem}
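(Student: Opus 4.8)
The plan is to push the whole situation down to the special fibre and there express everything through the Fourier--Mukai transform $F_B$ of the abelian variety $B$ and the projective--bundle geometry of $\PP\to B$.

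\emph{Step 1: specialization commutes with Fourier--Mukai.} The transform $\varphi_\eta=F(c_\eta)$ is $p_{2*}\big(p_1^{*}c_\eta\cdot\operatorname{ch}(\Pcal_\eta)\big)$, where $\Pcal_\eta$ is the Poincar\'e bundle on $X_\eta\times\hat X_\eta$; using the principal polarization we identify $\hat X_\eta\cong X_\eta$ once and for all. This line bundle spreads out: it extends to a sheaf $\Pcal$ on $\X\times_S\hat{\X}$ (again $\hat{\X}\cong\X$ via the polarization), whose restriction $\Pcal_0$ to $X_0\times\hat X_0$ we call the limit Poincar\'e bundle. Since proper pushforward, flat pullback, intersection product and the Chern character all commute with the specialization homomorphism on Chow groups with $\QQ$--coefficients, $\varphi_0=p_{2*}\big(p_1^{*}c_0\cdot\operatorname{ch}(\Pcal_0)\big)$; in particular $\varphi_0$ depends only on $c_0$ and not on the chosen lift $c_\eta$, a fact already implicit in the statement.

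\emph{Step 2: the limit Poincar\'e bundle.} Pull $\Pcal_0$ back to $\PP\times\hat\PP$ along the finite birational map $\nu\times\hat\nu$. Over the open semi--abelian locus $X_0^{\star}\times\hat X_0^{\star}$ the Poincar\'e biextension is the pullback of $\Pcal_B$ from $B\times\hat B$ twisted by the canonical biextension of the two extension classes of $X_0^{\star}$ and $\hat X_0^{\star}$, which under the polarization are both given by the class of $J$. Extending this line bundle across the two boundary sections $B_0,B_\infty$ of each $\PP^1$--bundle introduces $\eta$, respectively $\hat\eta$, together with a contribution encoding the translation by which $B_0$ is glued to $B_\infty$ to form $X_0$. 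One obtains a closed formula for $(\nu\times\hat\nu)^{*}c_1(\Pcal_0)$ as $q_{12}^{*}c_1(\Pcal_B)$ plus explicit terms in $\eta,\hat\eta$ and $q^{*}c_1(J),\hat q^{*}c_1(J)$; exponentiating and reducing powers by $\eta^{2}=\eta\,q^{*}c_1(J)$ (and similarly for $\hat\eta$) produces in each $\PP$--direction expressions of shape $\dfrac{e^{c_1(J)}-1}{c_1(J)}$, so the factorials start to appear. Everything stays finite because $\dim B=g-1$ makes $c_1(J)$ nilpotent, which is also why the summation range $0\le n\le 2g-2$ (one copy for each $\PP^{1}$--direction) suffices.

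\emph{Step 3: the pushforward.} Writing $c_0=\nu_*(q^{*}z+q^{*}w\cdot\eta)$, compute $p_1^{*}c_0\cdot\operatorname{ch}(\Pcal_0)$ on $\PP\times\hat\PP$ by the projection formula and push forward in three stages. First along $q$ on the first factor, using $q_*(1)=0$ and $q_*(\eta^{k})=c_1^{k-1}(J)$: this already shows that the $q^{*}w\cdot\eta$--part of $c_0$ yields the leading term $F_B(w)$ of $a$, while the $q^{*}z$--part contributes only through boundary corrections, consistent with the absence of a bare $F_B(z)$ in $b$. Next apply $F_B=p_{2*}\big(p_1^{*}(-)\cdot e^{c_1(\Pcal_B)}\big)$ on $B\times\hat B$. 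Finally push forward along $\hat q$ and down via $\hat\nu$. The two exponential expansions, one from $\operatorname{ch}(\Pcal_B)$ and one from the boundary part of $\Pcal_0$, combine after the two $\PP^{1}$--bundle pushforwards into the double sums in the statement; the shift ``$n+2$'' records the two factors $c_1(J)^{-1}$ created by those two pushforwards acting on $\exp$--terms, an identity of the type $\dfrac{e^{x}-1-x}{x^{2}}=\sum_{n\ge0}\dfrac{x^{n}}{(n+2)!}$ with $x=\pm c_1(J)$, and the sign patterns $(-1)^{m}$ and $(-1)^{n+1}-1$ versus $-1$ come from the alternating expansion of the period/gluing factor and from the sign with which $c_1(J)$ enters on the dual side.

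\emph{Main obstacle.} The crux is Step 2: identifying $\Pcal_0$ as the genuine cycle--theoretic limit of $\Pcal_\eta$ on the non--normal variety $X_0\times\hat X_0$, and especially getting the translation in the gluing $B_0\sim B_\infty$ into the formula correctly; this draws on the theory of degenerations of abelian varieties and of biextensions. Once $\Pcal_0$ is in hand, Step 3 is a long but mechanical computation with the projective--bundle formula and the Fourier--Mukai calculus on $B$, and matching the coefficients $\tfrac{(-1)^{m}}{(n+2)!}$ together with the sign patterns is bookkeeping.
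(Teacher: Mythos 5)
Your overall strategy (specialize, then compute the transform on the special fibre via an extension of the Poincar\'e bundle) is the right one, but Step 2 contains the essential gap, and it is not a bookkeeping issue. You propose to work on $\X\times_S\X$, equivalently on $\PP\times\PP$ after normalizing the special fibre, and to write $c_1$ of the limit Poincar\'e bundle there as $(q\times q)^*c_1(P_B)$ plus terms in $\eta_1,\eta_2$ and $q^*c_1(J)$. This is not how the degeneration works: the total space $\X\times_S\X$ is singular along $B\times B$ in the special fibre, the addition map (which is what one uses to build the extension $\Pcal=\bar\mu^*\Lcal\otimes\rho_1^*\Lcal^{-1}\otimes\rho_2^*\Lcal^{-1}$ of the Poincar\'e bundle out of the theta bundle) is only a rational map there, and the correct model is a small resolution $\Y\to\X\times_S\X$. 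The normalization of the special fibre of $\Y$ is $\PP\times\PP$ blown up along the two anti-diagonal sections $\PP_1\times\PP_2$ and $\PP_2\times\PP_1$, and on it one has
$$
c_1(\sigma^*\Pcal_0)=\tau^*(q\times q)^*c_1(P_B)-E_{12}-E_{21}.
$$
The exceptional classes $E_{12},E_{21}$ are not pullbacks from $\PP\times\PP$ and cannot be rewritten in terms of $\eta_1,\eta_2,c_1(J)$ (indeed $\tau_*E_{ij}=0$), so your ansatz for $c_1(\Pcal_0)$ cannot hold. Every nontrivial term in $a$ and $b$ arises from $\tau_*\bigl(\tau^*p_1^*x\cdot E_{ij}^k\bigr)$, computed via $E_{ij}^k=(-1)^{k-1}\epsilon_{ij*}\xi_{ij}^{k-1}$ and the quadratic relation $\xi_{ij}^2+\pi_{ij}'^*(\gamma_i-\gamma_j)\xi_{ij}-\pi_{ij}'^*(\gamma_i\gamma_j)=0$ on the exceptional $\PP^1$-bundles $E_{ij}=\PP(q_1^*J^{\mp1}\oplus q_2^*J^{\pm1})$ over $B\times B$. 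That relation is exactly what produces the inner alternating sum $\sum_{m=0}^{n}(-1)^m\gamma_1^m\gamma_2^{n-m}$ in the statement; your proposed mechanism (a telescoping $\frac{e^x-1-x}{x^2}$ coming from two $\PP^1$-bundle pushforwards of exponentials) does not generate this structure, and the $\frac{1}{(n+2)!}$ in fact comes from the expansion of $e^{-E_{12}-E_{21}}$ (using $E_{12}\cdot E_{21}=0$) with the substitution $k=n+2$, not from inverting $c_1(J)$ twice.

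A secondary point: your appeal to ``intersection product commutes with specialization'' hides a real step. To compute $\sigma_Y\bigl(e^{c_1(\Pcal_\eta)}\cdot\rho_1^*c_\eta\bigr)$ one must choose a cycle extension of $\rho_1^*c_\eta$ over the (singular, non-normal) special fibre and identify its Gysin restriction; this requires moving $c$ so that $c_0$ meets the singular locus of $X_0$ properly and then checking that $i_0^*\rho_1^*c=\sigma_*\tau^*p_1^*\gamma$ on the resolved model. Without the resolution and the explicit extension of the addition map, neither the limit Poincar\'e bundle nor this identification is available, so the computation cannot be completed along the lines you describe.
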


We denote algebraic equivalence by $\a=$. The relation $c_1(J)\a= 0$ implies
the following result.

\begin{theorem} \label{cor: aeFT}
With the above notation the limit $\varphi_0$ satisfies
$$
\varphi_0 \, \a= \, \nu_*(q^* F_B(w)-q^*F_B(z)\cdot \eta)\, .
$$
\end{theorem}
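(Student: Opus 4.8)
The plan is to deduce Theorem~\ref{cor: aeFT} directly from the explicit formulas of Theorem~\ref{th: aeFT} by reducing the Chow-theoretic expressions for $a$ and $b$ modulo algebraic equivalence. The only input is the relation $c_1(J)\a= 0$, which holds because the line bundle $J$ classifying the torus extension represents a point of $\mathrm{Pic}^0(B)$. Beyond this, I would use two elementary stability properties of algebraic equivalence on the smooth varieties $B$ and $\PP$: the intersection product of an arbitrary cycle with the algebraically trivial class $c_1(J)$ is again algebraically trivial, and the Fourier--Mukai transform $F_B$, being induced by the Poincar\'e correspondence on $B\times\hat{B}$, carries algebraically trivial cycles to algebraically trivial cycles. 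Thus, for every cycle $\xi$ on $B$ and every integer $j\ge 1$, both $\xi\cdot c_1^{j}(J)\a= 0$ and $F_B[\xi\cdot c_1^{j}(J)]\a= 0$.

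I would then run through the two formulas. In the expression for $a$, every summand of the double sum carries the explicit factor $c_1^{n-m+1}(J)$ with $n-m+1\ge 1$, hence is algebraically trivial; so $a\a= F_B(w)$. In the expression for $b$, the $(n,m)$-summand carries the factor $c_1^{n-m}(J)$, so all off-diagonal terms with $n>m$ are algebraically trivial and only the diagonal $n=m$ can contribute. For $n=m$ the argument of $F_B$ is $\bigl(((-1)^{n+1}-1)z-w\cdot c_1(J)\bigr)\cdot c_1^{n}(J)$; when $n=m\ge 1$ this carries a positive power of $c_1(J)$ and is therefore killed by $F_B$ modulo algebraic equivalence, while for $n=m$ odd the scalar $(-1)^{n+1}-1$ in front of $z$ already vanishes. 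Hence only the term $n=m=0$ survives, equal to $\frac{1}{2}\,F_B\bigl(-2z-w\cdot c_1(J)\bigr)\a= -F_B(z)$, using $F_B[w\cdot c_1(J)]\a= 0$ once more. So $b\a= -F_B(z)$.

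Finally, since $q^{*}$ and $\nu_{*}$ preserve algebraic equivalence, substituting $a\a= F_B(w)$ and $b\a= -F_B(z)$ into $\varphi_0=\nu_*(q^*a+q^*b\cdot\eta)$ gives $\varphi_0\a=\nu_*\bigl(q^*F_B(w)-q^*F_B(z)\cdot\eta\bigr)$, as claimed. The substantive point --- and the main obstacle if one wants a self-contained argument --- is the stability of algebraic equivalence under the correspondence defining $F_B$ and under intersection with the divisor class $c_1(J)$; once these standard facts are recorded, the rest of the proof is the formal bookkeeping sketched above.
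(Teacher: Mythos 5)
Your proposal is correct and is essentially the paper's own argument (Corollary \ref{acor:FTS}): starting from the explicit formulas for $a$ and $b$ in Theorem \ref{th: aeFT}, the relation $c_1(J)\a= 0$ kills every term carrying a positive power of $c_1(J)$, leaving only $a\a= F_B(w)$ and the $n=m=0$ term $\tfrac{1}{2}F_B(-2z-w\cdot c_1(J))\a= -F_B(z)$ for $b$. The auxiliary facts you flag --- that algebraically trivial classes form an ideal under intersection and are preserved by the correspondence defining $F_B$ --- are exactly the (standard) inputs the paper uses implicitly.
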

Note that this is compatible with the fact that for a
principally polarized abelian variety $A$ of dimension $g$ 
the Fourier-Mukai transform satisfies 
$F_{A} \circ F_A= (-1)^g (-1_A)^*$. 

Beauville introduced in \cite{B1} a decomposition on the Chow
ring with rational coefficients of an abelian variety using
the Fourier-Mukai transform. Theorem \ref{cor: aeFT}
can be used to deduce non-vanishing results for Beauville
components of cycles on the generic fibre of a semi-abelian variety
of rank $1$; we refer to \S \ref{Applications} for examples.

We prove the theorem by constructing a smooth model ${\Y}$ 
of ${\X}\times_{S}{\X}$ to which the addition map 
${\Xs}\times_{S}{\Xs} \to {\Xs}$ extends and by 
choosing an appropriate extension of the Poincar\'e bundle to ${\Y}$.
The proof is then reduced to a calculation in the special fibre.
We refer to Fulton's book \cite{F} for the intersection theory we use. 
The theory in that book is built for algebraic schemes over a field. 
In our case we work over the spectrum  of a 
discrete valuation ring. But as is
stated in \S \, 20.1 and 20.2 there, most of the theory in Fulton's book, 
including in particular the statements we use in this paper, is valid 
for schemes of finite type and separated over $S$. However, for us 
projective space denotes the space of hyperplanes and not lines, which
conflicts with Fulton's book, but is in accordance with \cite{Ha}. 
\end{section}
\begin{section}{Families of abelian varieties with a rank one degeneration }
\label{sec:family}\label{families}
We now assume that $R$ is a complete 
discrete valuation ring with local parameter~$t$,
field of quotients $K$ and algebraically closed residue field $k$.
Suppose that $({\Xs},{\mathcal L}) $ is a semi-abelian variety 
over $S={\rm Spec}(R)$ 
such that the generic fibre ${X}_{\eta}$ is abelian and the special fibre 
$X^{\ast}_0$ has torus rank $1$; moreover, we assume that ${\mathcal L}$ 
is a cubical 
invertible sheaf (meaning that ${\mathcal L}$ satisfies the theorem of the cube, 
see \cite{FC}, p. 2, 8) and $L_{\eta}$ is ample.
In particular, the special fibre of ${\Xs}$ fits in an exact sequence
$$
1 \to T_0 \to X_0^{\ast} \to B \to 0,
$$
where $B$ is an abelian variety over $k$ and $T_0$ the multiplicative 
group ${\GG}_m$ over $k$. The torus $T_0$ lifts uniquely to a torus $T_i$ 
of rank $1$ over $S_i= {\rm Spec}(R/(t^{i+1})$ in $X_i^{\ast}= 
{\Xs} \times_S S_i$.
The quotient $X_i^{\ast}/T_i$ is an abelian variety $B_i$ over $S_i$. The system
$\{ B_i\}_{i=1}^{\infty}$ defines a formal abelian variety which is 
algebraizable, so that we have an exact sequence of group schemes over $S$
$$
1 \to T \to G \;{\buildrel \pi \over \to} \;{\mathcal B} \to 0,
$$
cf.\ [F-C, p.\ 34].
We assume now that we are given a line bundle $M$ on ${\mathcal B}$ 
defining a principal 
polarization $\lambda: B \to B^t$ and consider $\pi^*(M)$. This defines 
a cubical line bundle on~$G$. The extension $G$ is given by a homomorphism
$c$ of the character group $Z\cong {\ZZ}$ of $T$ to ${\mathcal B}^t$.
The semi-abelian group scheme dual to ${\Xs}$ defines a similar extension
$$
1 \to T^t \to G^t \to {\mathcal B}^t \to 0
$$
and the polarization provides an
isomorphism $\phi$ of the character group $Z$ of $T$ with the
character group $Z^t$ of $T^t$.
Now the degenerating abelian variety (i.e.\ semi-abelian variety) ${\Xs}$
over $S$ gives rise to the set of degeneration data 
(cf.\ \cite{FC}, p 51, Thm 6.2, or \cite{AN}, Def.\ 2.3):
\begin{enumerate}
\item[(i)] an abelian variety ${\mathcal B}$ over $S$ and a rank $1$ extension $G$. This amounts to a $S$-valued point $b$ of ${\mathcal B}={\mathcal B}^t$.
\item[(ii)] a $K$-valued point of $G$ lying over $b$.
\item[(iii)] a cubical ample sheaf $L$ on $G$ inducing 
the polarization on ${\mathcal B}$ and an action of $Z=Z^t$
on $L_{\eta}$.
\end{enumerate}
A section $s \in \Gamma(G,L)$ can be written uniquely as
$s = \sum_{\chi \in Z} \sigma_{\chi}(s)$,
where $\sigma_{\chi}: \Gamma(G,L) \to \Gamma({\mathcal B},M_{\chi})$ is a
$R$-linear homomorphism and $M_{\chi}$ is the twist of $M$ by $\chi$:
in fact $\pi_*(O_G)=\oplus_{\chi} O_{\chi}$ with $O_{\chi}$ the subsheaf
consisting of $\chi$-eigenfunctions. (We refer to \cite{FC},
p.\ 43; note also the sign conventions there in the last lines.) 
We have now by the action
$$
c^t(y)^*M\cong M_{\phi(y)}\cong M\otimes O_{\phi(y)}, \qquad y \in Z^t.
$$
This satisfies
$\sigma_{\chi+1}(s)=\psi(1) \tau(\chi) T^*_{b}(\sigma_{\chi}(s))$,
where $\tau$ is given by a point of $G(K)$ lying over $b$ and $\psi$
is as in \cite{FC}, p.\ 44.
We refer to Faltings-Chai's theorem (6.2) of \cite{FC}, p.\ 51
 for the degeneration data.

The compactification ${\X}$ of ${\Xs}$ is now constructed as a quotient of
the action of $Z^t$ on a so-called relatively complete model.
Such a relatively complete model $\tilde{P}$
for $G$ can be constructed here in an essentially unique way. 
If $B$ is trivial (i.e.\ $\dim(B)=0$)  and if the torus is 
$T={\rm Spec}(R[z,z^{-1}])$  it is given as the toroidal variety 
obtained by gluing the affine pieces
$$
U_n={\rm Spec}(R[x_n,y_n]), \qquad {\rm with} \qquad x_ny_n=t
$$
where $G \subset \tilde{P}$ is given by
$x_n=z/t^n, \quad y_n=t^{n+1}/z$,
cf.\ \cite{Mumford1}, also in \cite{FC}, p.\ 306].
By glueing we obtain an infinite chain $\tilde{P}_0$ of ${\PP}^1$'s
in the special fibre. We can `divide' by the action of $Z^t$;
this is easy in the analytic case, more involved in
the algebraic case, but amounts to the same, cf.\ \cite{Mumford1}, 
also \cite{FC}, p.\ 55-56.

In the special fibre we find a rational curve with one ordinary
double point. If instead we divide by the action of $nZ^t$ for $n>1$
we find a cycle consisting of $n$ copies of ${\PP}^1$.

In case the abelian part $B$ is not trivial we take as a relatively
complete model the contracted
(or smashed) product $\tilde{P}\times^T G$ with $\tilde{P}$ the 
relatively complete model for the case that $B$ is trivial. Call
the resulting space $\tilde{P}$.
Then $\tilde{P}$ corresponds by
Mumford's [loc.\ cit., p 29] to a polyhedral decomposition of 
$Z^t\otimes {\RR}={\RR}$ with $Z^t$ the cocharacter group of $T$. 
Then we essentially divide through the action of $Z^t$ or $nZ^t$ as before
and obtain a proper ${\X} \to S$.

We describe the central fibre $X_0$ of ${\X}$.
Let $b$ be the $k$-valued point of $B\cong B^t$ that
determines the above ${\GG}_m$-extension. 
If $M$ denotes a line bundle defining the principal polarization
of $B$ we let $M_b$ be the translation of $M$ by $b$
and we set $J=M \otimes M_b^{-1}$ and 
define the projective bundle ${\PP}={\PP}(J\oplus {\Ocal}_B)$
with projection $q:{\PP} \rightarrow B$.  
The bundle ${\PP}$ has two natural sections (with images) ${\PP}_1$ and 
${\PP}_2$ corresponding to the projections 
$J\oplus  {\Ocal}_B \rightarrow  J$ and $J\oplus {\Ocal}_B\rightarrow {\Ocal}_B$.
We have ${\Ocal}({\PP}_1)\cong {\Ocal}({\PP}_2) \otimes q^*J $
and ${\Ocal}(1)\cong  \Ocal ({\PP}_1)$ with ${\Ocal}(1)$ the natural
line bundle on ${\PP}$.  
We denote by  $\overline{\PP}$ the non-normal variety obtained by gluing  
the sections ${\PP}_1$ and ${\PP}_2$ under a translation by the point $b$. 
The singular locus of $\overline{\PP}$ has support isomorphic to $B$. 
The line bundle 
$\tilde{L}={\Ocal}({\PP}_1)\otimes q^*M_b \cong {\Ocal}({\PP}_2)
\otimes q^*M$ 
descends to a line bundle $\overline{L}$ on $\overline{\PP}$ 
with a unique ample divisor $D$, see \cite{Mu}.  
The central family $X_0$ of the  family $\pi: \Xcal \rightarrow S$ 
is then equal to $\bar{\PP}$. 
The cubical invertible sheaf $\Lcal $ on $\Xs$ extends (uniquely)
to $\X$ and its restriction to the central fiber $\bar{\PP}$ is the line bundle
 $\overline{L}$, see~\cite{Na}. 
\end{section}
\begin{section}{Extension of the addition map} \label{sec:addition}
The addition map $\mu: {\Xs}\times_S {\Xs} \to {\Xs}$ of the semi-abelian
scheme ${\Xs}$ does not extend to a morphism ${\X}\times_S {\X} \to {\X}$, 
but it does so after a small blow-up of ${\X}\times_S {\X}$ as we shall see. 

The degeneration data of ${\Xs}$ defines (product) degeneration data for 
${\Xs}\times_S {\Xs}$. Indeed, we can take the fibre product of
the relatively complete model
$\tilde{P}'=\tilde{P}\times_S \tilde{P}$ and this corresponds 
(e.g.\ via \cite{Mumford1}, Corollary (6.6)) to the
standard polyhedral decomposition of 
${\RR}^2=(Z^t\otimes {\RR})^2$ by the lines $x=m$ and
$y=n$ for $m,n \in {\ZZ}$.
The special fibre of the model $\tilde{P}'$ is an infinite union of
${\PP}^1 \times {\PP}^1$-bundles over $B\times B$ glued along 
the fibres over $0$ and $\infty$.
The compactified model of ${\X}\times_S {\X}$ is obtained by taking the `quotient'
of $\tilde{P}'$ under the action of $Z^t \times Z^t$. This is not regular;
for example the criterion of Mumford (\cite{Mumford1}, p.\ 29, point (D)])
is not satisfied. We can remedy this by subdividing.
For example, by taking the decomposition of ${\RR}^2$
given by the lines $x=m, y=n$ and $x+y=l$ for $m,n,l \in {\ZZ}$.

The special fibre of this model is  an infinite union of copies
of ${\PP}^1\times{\PP}^1$-bundles over $B \times B$  
blown up in the two anti-diagonal sections
$(0,\infty)=\PP_1\times \PP_2$ and $(\infty,0)=\PP_2\times \PP_1$.  
This is regular. 

Both the polyhedral decompositions are invariant under the action of
translations $(x,y) \mapsto (x+a,y+b)$ for fixed $a,b \in {\ZZ}$.
This means that we can form the `quotient' by $Z^t\times Z^t\cong {\ZZ}^2$ 
(or a subgroup $nZ^t \times nZ^t$) and obtain 
a completed semi-abelian abelian variety ${\Y}$ of relative dimension $2g$
over $S$. We denote by $\epsilon: {\Y}\to {\Y}'=\X \times_S \X$ the natural map. 
We shall write $V$ for $Y_0$ and $\sigma: \tilde{V}\to V$ 
for its normalization. Then $\tilde{V}$ is an irreducible component 
of the special fibre of $\tilde{P}'$. 
We denote by $\tau: \tilde{V} \to {\PP}^1\times{\PP}^1$ the blow up map 
and by $E_{12}$ and $E_{21}$ the exceptional divisors over the 
blowing up loci $\PP_1\times \PP_2$ and $\PP_2\times \PP_1$, respectively. 

Now consider the addition map $\mu: {\Xs} \times_S {\Xs} \to {\Xs}$ 
with ${\Xs}$
as in the preceding section. This morphism is induces (and is induced by) 
by a map $\tilde{\mu}: G\times_S G \to  G$. However,
this map does not extend to a morphism  of the relatively complete
model $\tilde{P}'$ since the corresponding
(covariant) map $(Z^t\otimes {\RR})^2 \to (Z^t\otimes {\RR})$ does not
have the property that it maps cells to cells. After subdividing (by adding
the lines $x+y=l$ with $l \in {\ZZ}$)
this property is satisfied (cf.\  \cite{KKMS}, Thm.\ 7, p.\ 25).
This means that the map
$\mu$ extends to $\tilde{\mu}: \tilde{P}' \to \tilde{P}$ for  the
polyhedral decomposition given by   this subdivision. 
It is compatible with the action of
${\ZZ}$ and ${\ZZ}\times {\ZZ}$ 
and hence descends to a morphism $\bar{\mu}:  {\Ycal} \to {\Xcal}$.
We summarize:

\begin{proposition} \label{pro:extension}
The addition map of group schemes  $\mu : {\Xs}\times_S {\Xs}
 \rightarrow X^{\star}$ extends to a morphism
$\bar{\mu} : {\Y} \rightarrow {\X}$.
\end{proposition}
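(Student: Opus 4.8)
The plan is to obtain $\bar\mu$ by descending, along the quotient maps by the period lattices, a torus-equivariant extension of the addition map to the relatively complete models; the existence of that extension is to be read off from the combinatorics of the polyhedral decompositions set up above. Recall that $\mu$ is induced by the addition homomorphism $\tilde\mu : G\times_S G \to G$ of the semi-abelian group schemes, that on the abelian parts this is the addition ${\mathcal B}\times_S{\mathcal B}\to{\mathcal B}$, a morphism throughout, so that the only obstruction to extending $\tilde\mu$ lies in the torus directions, where it is governed by the covariant linear map $\rho : (Z^t\otimes{\RR})^2 \to Z^t\otimes{\RR}$, $(x,y)\mapsto x+y$, which carries $Z^t\oplus Z^t$ onto $Z^t$. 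The product model $\tilde P'=\tilde P\times_S\tilde P$ carries the decomposition $\Delta'$ of ${\RR}^2$ by the lines $x=m$, $y=n$, and $\tilde P$ carries the decomposition $\Delta$ of ${\RR}$ by the integer points; since a unit square of $\Delta'$ maps under $\rho$ onto a segment of length $2$, it lies in no single cell of $\Delta$, which is exactly why $\tilde\mu$ does not extend to $\tilde P'$.

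Next I would pass to the subdivided decomposition $\Delta''$ obtained by adjoining the anti-diagonal lines $x+y=l$, $l\in{\ZZ}$ --- the decomposition whose relatively complete model, which I will still denote $\tilde P'$, has special fibre the union of ${\PP}^1\times{\PP}^1$-bundles over $B\times B$ blown up in the two sections $\PP_1\times\PP_2$ and $\PP_2\times\PP_1$ --- and verify that $\rho$ now maps every cell of $\Delta''$ into a cell of $\Delta$. Concretely, each unit square is cut into the two triangles with vertex sets $\{(m,n),(m+1,n),(m,n+1)\}$ and $\{(m+1,n),(m,n+1),(m+1,n+1)\}$, which $\rho$ sends onto the segments $[m+n,\,m+n+1]$ and $[m+n+1,\,m+n+2]$ respectively, and lattice points are sent to lattice points. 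By the functoriality of torus embeddings, equivalently of relatively complete models, with respect to maps of their defining polyhedral decompositions that are compatible with the underlying lattice homomorphism (\cite{KKMS}, Thm.\ 7, p.\ 25; see also \cite{Mumford1}), the map $\tilde\mu$ then extends to a morphism $\tilde\mu : \tilde P' \to \tilde P$.

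Finally I would descend. Both $\Delta''$ and $\Delta$ are invariant under the translation actions of $Z^t\times Z^t$ and of $Z^t$, and $\rho$ intertwines translation by $(a,b)$ with translation by $a+b$, so the extended $\tilde\mu$ is equivariant for these actions, and likewise for the sublattices $nZ^t\times nZ^t$ and $nZ^t$. Passing to the quotients --- which by \S\ref{sec:family} and \S\ref{sec:addition} produce ${\X}$ from $\tilde P$ and ${\Y}$ from $\tilde P'$ --- the map $\tilde\mu$ descends to a morphism $\bar\mu : {\Y}\to{\X}$; and since on the open semi-abelian loci $\tilde\mu$ restricts to the addition of $G\times_S G$, its descent restricts on ${\Xs}\times_S{\Xs}$ to $\mu$, so $\bar\mu$ is the desired extension. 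I expect the only step requiring genuine care to be the combinatorial claim that this precise subdivision $\Delta''$ simultaneously achieves the cell-into-cell property, regularity of the model, and invariance under the period lattice; once that is granted, both the extension on relatively complete models and the passage to the quotient are formal.
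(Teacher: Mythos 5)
Your argument is essentially the paper's own proof: the extension is obstructed only in the torus directions, where the covariant map $(x,y)\mapsto x+y$ fails to send cells of the product decomposition into cells of the target, and adjoining the lines $x+y=l$ fixes this (via \cite{KKMS}, Thm.\ 7), after which translation-equivariance lets the extended map descend to $\bar\mu:\Y\to\X$. Your explicit check that the two triangles of each subdivided unit square map onto the segments $[m+n,m+n+1]$ and $[m+n+1,m+n+2]$ is a welcome elaboration of the cell-into-cell claim the paper leaves implicit.
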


In the next section we shall see that the change from the model 
${\X}\times_S{\X}$ to ${\Y}$ is a small blow-up.

For later calculations we write down this map explicitly 
on the special fibre. We start with $g=1$; then $B$ is trivial and we may
restrict the
map to an irreducible component of the special
fibre of the relatively complete model $\tilde{P}\times_S\tilde{P}$
and get the map $m: {\PP}^1 \times {\PP}^1 \to {\PP}^1$ given by
$((a:b),(a':b')) \mapsto (aa':bb')$. This is not defined in the points
$(0,\infty)$ and $(\infty,0)$. After blowing up these points 
(which corresponds exactly to the change from 
${\X}\times_S{\X}$ to ${\Y}$) 
the rational map becomes a regular map $\tilde{m}: \tilde{V} \to {\PP}^1$. 
It is defined by the two sections 
${\rm prop}(p_1^* \{0\})+{\rm prop}(p_2^*\{ 0\} )$ and 
${\rm prop}(p_1^* \{\infty \})+{\rm prop}(p_2^*\{ \infty \} )$ of the
linear system $|\tau^*(F_1+F_2)-E_{12}-E_{21}|$
with $F_1$ and $F_2$ the horizontal and vertical fibre  
(with ${\rm  prop}(\;)$ meaning the proper transform). 
The map $\tilde{m}$ descends 
to a map $\bar{m} :V \to \bar{\PP}$ which is the restriction of the morphism 
$\bar{\mu} : {\Y} \rightarrow {\X}$ to the central fiber.  

For the case that $g>1$, note that we have the addition map $\mu_{\Xs}$.
Its restriction to the special fibre extends to a map
of the relatively complete model and then restricts to a morphism
$\tilde{m}: \tilde{V} \to {\PP}$ that lifts the addition map $\mu_B$ of $B$.
That means that it comes from a surjective 
bundle map (cf.\ \cite{Ha}, Ch.\ II, Prop.\ 7.12)
$$
\delta: m_1 ^*(J\oplus {\Ocal})\cong (p_1^*q^*J \otimes p_2^*q^*J) \oplus 
{\Ocal}  \to N
$$ 
with $m_1:=\mu_B \circ (q\times q) \circ \tau: \tilde{V} \to B$ 
and $N=\tau^*(p_1^*{\Ocal}({\PP}_1) \otimes p_2^*{\Ocal}({\PP}_1))
\otimes {\Ocal}(-E_{12}-E_{21}))$ with 
$p_i: \PP \times \PP \to \PP$ the $i$th projection. 
Then $m_1^*(J\oplus {\Ocal})^{\vee}\otimes N$ is isomorphic to
the direct sum of
$$
\tau^*p_1^*{\Ocal}({\PP}_i)\otimes \tau^*p_2^*{\Ocal}({\PP}_i)
\otimes {\Ocal}(-E_{12}-E_{21}) \qquad (i=1,2).
$$ 
The map $\delta $ is then given by the two sections
${\rm prop}(p_1^* {\PP}_i)+{\rm prop}(p_2^*{\PP}_i)$
of $\tau^*p_1^*{\Ocal}({\PP}_i)\otimes \tau^*p_2^*{\Ocal}({\PP}_i)
\otimes {\Ocal}(-E_{12}-E_{21})$ for $i=1,2$.
The map $\tilde{m}$ descends 
to a map $\bar{m} :V \to \bar{\PP}$ which is the restriction of the morphism 
$\bar{\mu} : {\Y} \rightarrow {\X}$ to the central fiber.
\end{section}
\begin{section}{An explicit model of ${\Y}$}
\label{modelY}
We now describe an explicit local construction of the model ${\Y}$ 
by blowing up the model ${\X}\times_S{\X}$. 
Let ${A}^{g+1}_S={\rm Spec}(R[x_1,\ldots,x_{g+1}])$ denote affine 
$S$-space. In local coordinates, inside ${A}_S^{g+1}$, we may assume 
that the $g$-dimensional fibration  $\pi: {\Xs} \to S$ is
given by the equation $x_1x_2=t$, where the coordinates 
$x_3, \ldots, x_{g+1}$ are not involved, see \cite{Mu} p.\ 361-362.
We may assume that the zero section of the family is defined  by 
$x_i=1$ for $i=1,\ldots,g+1$.

We form the fiber product $\pi: {\Y}'={\X}\times_S {\X}$. 
We denote by $T$ the support of the singular locus of $X_0$.
The $2g+1$ dimensional variety ${\Y}'$ is  singular in the special fiber 
along $\Sigma =T \times_k T \cong B \times_k B $ of dimension $2g-2$.
The generic fiber  $Y'_{\eta}$ is the product $X_{\eta}\times_KX_{\eta}$
of the abelian variety $X_{\eta}$, while the zero fiber $Y'_0$ is singular.  
The local equations of ${\Y}'$ in a neighborhood of the singular locus of 
the family are given in our local coordinates by the system  
$x_1x_2=t, \, x'_1x'_2=t$.
The singular locus $\Sigma$ of ${\Y}'$ is given by the equations 
$x_1=x_2=x'_1=x'_2=t=0$.   

The above blow up $\epsilon : {\Y} \to {\Y}'$ is a small blow up and can be 
described directly as follows:
we blow up ${\Y}'$ along its subvariety $\Pi$ defined by  
$x_1=x_2'=0$ (a 2-plane contained in the central fiber of ${\Y}'$).
The proper transform ${\Y}$ of ${\Y}'$ is smooth.
In local coordinates, the blow-up is given by the
graph $\Gamma_{\phi} \subseteq Y' \times \PP^1$
of the rational map $\phi: {\Y}' \longrightarrow {\PP}^1$ given by
$\phi(x_1,\ldots,x'_{g+1},t)=(x_1: x'_2)$. The equations of the graph
$\Gamma_{\phi}\subseteq Y' \times \PP^1 
\subseteq A^{2(g+1)}_S \times_S {\PP}^1_S$
are given by the system
$$
x_1x_2=t,\; ux_2'-vx_1=0,\; ux_2-vx_1'=0\, ,
$$
where $u,v$ are homogeneous coordinates on $\PP ^1$. 

\end{section}
\begin{section}{Extension of the Poincar\'e bundle}
\label{sec:Poincare}
We denote by $j_0:X_0\hookrightarrow {\X}$ and  
$i_0: Y_0\hookrightarrow {\Y}$ the inclusions of the special fiber.
Recall that we write $V$ for $Y_0$ and $\tilde{V}$ for its normalization.
We denote by ${\Pcal}_{\eta}$ the Poincar\'e bundle
on  $Y'_{\eta}$ and  by $P_B$ the Poincar\'e bundle on $B$.

\begin{theorem}\label{pb}
The Poincar\'e bundle ${\Pcal}_{\eta}$ has an extension
${\Pcal}$ such that the pull back of ${\Pcal}_0:=i_0^*{\Pcal}$ to 
$\tilde{V}$ satisfies
$\sigma^* \Pcal _0 \cong \tau^*(q \times q)^* P_B \otimes 
{\Ocal}(- E_{12}-E_{21})$.
\end{theorem}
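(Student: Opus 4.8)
The plan is to extend the Poincaré bundle on $Y'_\eta = X_\eta \times_K X_\eta$ over the model $\Y$ by first describing a natural extension over $\Y' = \X \times_S \X$ (or over the relatively complete model) and then pulling back through the small blow-up $\epsilon:\Y\to\Y'$, correcting by a suitable combination of the exceptional divisors. The starting point is the defining property of the Poincaré bundle: it is the line bundle whose restriction to $\{x\}\times X_\eta$ is the line bundle on $X_\eta$ corresponding to $x\in X_\eta^t$ under the polarization, and it is rigidified along the two "axes'' $X_\eta\times\{0\}$ and $\{0\}\times X_\eta$. Equivalently, if $m:X_\eta\times X_\eta\to X_\eta$ is the sum, $p_1,p_2$ the projections, and $L_\eta$ the principal polarization sheaf, then $\Pcal_\eta \cong m^*L_\eta \otimes p_1^*L_\eta^{-1}\otimes p_2^*L_\eta^{-1}$ (suitably normalized). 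This "theorem of the cube'' formula is the key, because every sheaf on the right-hand side has a canonical cubical extension over the base $S$: $L_\eta$ extends to $\Lcal$ on $\X$ with $j_0^*\Lcal = \overline{L}$ by the last paragraph of Section~\ref{families}, and $m$ extends to $\bar\mu:\Y\to\X$ by Proposition~\ref{pro:extension}, while $p_i$ extend to the projections $\Y'\to\X$ composed with $\epsilon$.

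The concrete steps are as follows. First I would set $\Pcal := \bar\mu^*\Lcal \otimes \epsilon^*(p_1^*\Lcal^{-1}\otimes p_2^*\Lcal^{-1})$ on $\Y$ (up to a line bundle pulled back from $S$, which does not affect the restriction to the special fibre), and check that its generic fibre is $\Pcal_\eta$ — this is just the cube formula on the abelian variety $X_\eta$. Second, I would pull back to the normalization $\tilde V$ of the special fibre $V=Y_0$ and compute each factor using the explicit descriptions from Sections~\ref{sec:addition}--\ref{modelY}. Here $\sigma^*\epsilon^* p_i^*\Lcal_0 = \tau^* p_i^*\overline{L}$, and by the formulas for $\overline{L}$ and $\tilde L$ in Section~\ref{families}, $p_i^*\overline{L}$ pulls back (via $\tau$ and the two sections) to $p_i^*(\Ocal(\PP_1)\otimes q^*M_b)$, whose Poincaré-type contributions must be tracked. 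Third, and most importantly, I would compute $\sigma^*\bar\mu^*\Lcal_0 = \sigma^*\bar m^*\overline{L}$ using the description of $\bar m:V\to\bar\PP$ via the bundle map $\delta:m_1^*(J\oplus\Ocal)\to N$ with $N=\tau^*(p_1^*\Ocal(\PP_1)\otimes p_2^*\Ocal(\PP_1))\otimes\Ocal(-E_{12}-E_{21})$. Since $\overline{L}$ corresponds on $\PP$ to $\Ocal(\PP_1)\otimes q^*M_b$ and $\bar m$ lifts $\mu_B$, one gets $\tilde m^*\overline{L} \cong N \otimes m_1^*M_b = \tau^*(p_1^*\Ocal(\PP_1)\otimes p_2^*\Ocal(\PP_1))\otimes\Ocal(-E_{12}-E_{21})\otimes (\mu_B\circ(q\times q)\circ\tau)^*M_b$. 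Fourth, I would assemble the three pieces: the $\Ocal(\PP_1)$ (equivalently $\Ocal(1)$, i.e.\ $\eta$) contributions cancel between $\bar m^*\overline{L}$ and $p_1^*\overline{L}^{-1}\otimes p_2^*\overline{L}^{-1}$, the $\Ocal(-E_{12}-E_{21})$ survives, and the $M$-type contributions on $B\times B$ reassemble via the cube formula on $B$ into exactly the Poincaré bundle $P_B$: namely $(q\times q)^*(\mu_B^* M_b \otimes \mathrm{pr}_1^*M^{-1}\otimes \mathrm{pr}_2^*M^{-1})$, and since $M$ is a principal polarization this last expression is $(q\times q)^*P_B$ up to a translation and a sheaf pulled back from $B\times B$ that one absorbs by adjusting the choice of extension and the rigidification.

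The main obstacle I expect is bookkeeping of the translations and the rigidifications: the point $b$ and the translation-by-$b$ used to glue $\PP_1$ to $\PP_2$ in forming $\bar\PP$, together with the sign conventions in the Faltings--Chai degeneration data recalled in Section~\ref{families}, intervene in several places, and one must verify that the translation-ambiguities in "$M_b$ vs.\ $M$'' and in "$\mu_B^*M$ vs.\ the cube formula'' cancel so that the clean statement $\sigma^*\Pcal_0 \cong \tau^*(q\times q)^*P_B \otimes \Ocal(-E_{12}-E_{21})$ holds on the nose (not merely up to translation or up to a line bundle from the base). A secondary subtlety is checking that the descent from $\tilde P'$ to $\Y$ and from $\tilde P$ to $\X$ is compatible with the chosen extension — i.e.\ that the $Z^t$- and $Z^t\times Z^t$-linearizations match — so that $\Pcal$ really is defined on $\Y$ and not just on the relatively complete model; this is where one uses that $\Lcal$ is cubical and the action data $\sigma_{\chi+1}(s)=\psi(1)\tau(\chi)T_b^*\sigma_\chi(s)$ from Section~\ref{families}. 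Once these normalization issues are pinned down, the identity is, as promised, a calculation in the special fibre.
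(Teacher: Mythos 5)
Your proposal is correct and follows essentially the same route as the paper: the authors also set $\Pcal=\bar\mu^*\Lcal\otimes\rho_1^*\Lcal^{-1}\otimes\rho_2^*\Lcal^{-1}$ (with $\rho_i$ the projections composed with $\epsilon$), pull back to $\tilde V$, use $\tilde m^*\Ocal(\PP_1)=N$ and $\nu^*\bar L=\Ocal(\PP_1)\otimes q^*M_b$ to cancel the $\Ocal(\PP_1)$-factors, and reassemble the remaining terms as $\tau^*(q\times q)^*(\mu_B^*M_b\otimes q_1^*M_b^{-1}\otimes q_2^*M_b^{-1})\otimes\Ocal(-E_{12}-E_{21})=\tau^*(q\times q)^*P_B\otimes\Ocal(-E_{12}-E_{21})$. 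The translation bookkeeping you worry about resolves itself because all three theta-factors restrict to the \emph{same} translate $M_b$, and the Mumford bundle $\mu_B^*M_b\otimes q_1^*M_b^{-1}\otimes q_2^*M_b^{-1}$ is canonically isomorphic to $P_B$ since $M_b\otimes M^{-1}\in\operatorname{Pic}^0(B)$.
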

\begin{proof}
We have the following commutative diagram of maps
\begin{displaymath}
\begin{xy}
\xymatrix{  
&V \ar[r]^{\bar{m}} & \overline{\PP} \\
& \tilde{V} \ar[u]_{\sigma}\ar[d]^{\tau} \ar[r]^{\tilde{m}} & {\PP}\ar@{=}[d]\ar[u]^{\nu} \\
{\PP}\ar[d]^q &{\PP}\times {\PP}\ar[d]_{q\times q} \ar[l]_{p_i} & {\PP}\ar[d]_q \\
B & B \times B \ar[l]_{q_i} \ar[r]^{\mu_B} & B
}
\end{xy}
\end{displaymath}

Let ${\Lcal}$ be the theta line bundle on the family 
${\X}$ introduced in section~\ref{sec:family}. 
We define the extension of ${\Pcal}^0$ by
$$
{\Pcal}:= \bar{\mu}^*{\Lcal}\otimes \rho_1^*{\Lcal}^{-1}\otimes 
\rho_2^*{\Lcal}^{-1},
$$
where we denote by  $\rho_1, \rho_2: {\Y} \rightarrow {\X}$ 
the compositions of the natural projections 
$\rho{'}_i: {\Y}' \rightarrow {\X}$ with the blowing up map
$\epsilon: {\Y} \rightarrow {\Y}'$ of section \ref{modelY}. We then have 
$\sigma^* \Pcal _0 =\sigma^*(\bar{m}^*j_0^*{\Lcal}) \otimes 
\sigma^*i_0^*\rho_1^*{\Lcal}^{-1}\otimes \sigma^*i_0^*\rho_2^*{\Lcal}^{-1}$.  
Now $\bar{m}^*j_0^*{\Lcal}=\bar{m}^*\bar{L}$, so 
$\sigma^*(\bar{m}^*j_0^* {\Lcal})=\sigma ^*\bar{m}^*\bar{L}
=\tilde{m}^* \nu^*\bar{L}=\tilde{m}^*({\Ocal}({\PP}_1) \otimes q^*M_b)$. 
In view of ${\Ocal}({\PP}_1)= {\Ocal}(1)$ 
we have $\tilde{m}^*  \Ocal ({\PP}_1)=N$, where $N$ is the line bundle introduced 
at the end of section \ref{sec:addition}. We thus get
$$
\tilde{m}^*{\Ocal}({\PP}_1)=\tau^*p_1^*{\Ocal}({\PP}_1) \otimes
\tau^*p_2^* \Ocal ({\PP}_1)\otimes \Ocal (-E_{12}-E_{21})
$$ 
and $\tilde{m}^*q^*M_b=\tau^*(q\times q)^*\mu_B^*M_b$.
On the other hand using the description of $\bar{L}$ in \S \ref{sec:family}
we see
$$
\begin{aligned}
\sigma^*(i_0^*\rho_i^* \Lcal )=&
\tau^* p_i^*\nu^* \bar{L} =\tau^*p_i^*( \Ocal ({\PP}_1) \otimes q^*M_b)\\
=&\tau^*p_i^* \Ocal ({\PP}_1) \otimes \tau^*(q\times q)^* q_i^*M_b.\\
\end{aligned}
$$ 
and putting this together we find
$$
\begin{aligned}
\sigma^* \Pcal _0&=\tau^*(q\times q)^*(\mu_B^*M_b\otimes q_1^*M_b^{-1}
\otimes q_2^*M_b^{-1}) 
\otimes \Ocal (-E_{12}-E_{21})\\
&=\tau^*(q\times q)^*P_B \otimes \Ocal (-E_{12}-E_{21}). \\
\end{aligned}
$$
\end{proof}
\end{section}
\begin{section}{The basic construction}
The fibration $\pi: {\Y}\rightarrow S$ is a flat map 
since $\Y $ is irreducible and $S$ is smooth $1$-dimensional, 
see  \cite{Ha}, Ch.\ III, Proposition 9.7.
The maps  $\rho_i=\Y \to \X $, $i=1,2$, defined in the proof of 
Theorem \ref{pb}, are flat maps too 
since they are  maps of smooth irreducible varieties with fibers of 
constant dimension $g$, see e.g.\ \cite{Mats}, Corollary of Thm.\ 23.1.

We denote by $Y_0$ (resp.\ $Y_{\eta}$) the special fibre (resp.\  
the generic fibre) 
and by $i_0: Y_0 \to{\Y}$ (resp.\  
$i_{\eta}: Y_{\eta} \to{\Y}$) the corresponding embedding.
According to \cite{F}, Example 10.1.2., $i_0$ 
is a regular embedding. Similarly, $j_0: X_0 \to \X$ is  a regular embedding. We consider the diagram
\begin{displaymath}
\begin{xy}
\xymatrix{
Y_0 \ar[r]^{i_0} \ar[d]^{\pi_0}& {\mathcal Y}\ar[d]^{\pi} \\
{\rm Spec}(k) \ar[r]^{s} & S \\
}
\end{xy}
\end{displaymath}
 Let $i_0^*: A_k(\Y)\to A_{k-1}({Y_{0}})$ be the Gysin map
(see  \cite{F}, Example 5.2.1).
Since $Y_0$ is an  effective Cartier divisor in $\Y$ the  Gysin map $i_0^*$
coincides with the Gysin map for divisors (see 
\cite{F}, Example 5.2.1 (a) and $\S \, 2.6$).

We now consider specialization of cycles, see \cite{F}, $\S \, 20.3$. 
Note that according to \cite{F}, Remark 6.2.1., in our case we have  
$s^{!} a = i_0^* a,\; a \in A_*(\Y)$.   
If ${\mathcal Z}$ is a flat scheme over the spectrum of a discrete valuation ring $S$ the specialization homomorphism $\sigma_Z: A_k(Z_{\eta})\to A_k(Z_0)$ is defined as follows, see  \cite{F}, pg. 399: If 
$\beta_{\eta}$ is a cycle on $Z_{\eta}$ 
we denote by $\beta$ an extension of $\beta_{\eta}$ in ${\mathcal Z}$ 
(e.g.\  the Zariski closure of $\beta_{\eta}$ in ${\mathcal Z}$) and then  
$\sigma_Z(\beta_{\eta})=i_0^*(\beta)$, 
where $i_0: Z_0 \to {\mathcal Z}$ is the natural embedding.
 
Let $c_{\eta}$ be a cycle on $X_{\eta}$ and let 
$\varphi_{\eta}=F(c_{\eta})$ be the Fourier-Mukai transform. 
It is defined by
$F(c_{\eta})=\rho_{2*}(e^{c_1({\Pcal}_{\eta})}\cdot \rho_1^*c_{\eta} ) 
\in A_*(X_{\eta})$. 
Let $\sigma_X :A_k(X_{\eta})\to A_k(X_0)$ be the specialization map. 
We have to determine $\sigma_X(F(c_{\eta}))$. 

If $\beta_{\eta}$ is a cycle on  $A_k(Y_{\eta})$ we have 
$\rho_{2*} \sigma _{Y} (\beta_{\eta}) = \sigma_X \rho_{2*}(\beta_{\eta})$ 
by applying \cite{F} Proposition 20.3 (a) to the proper map 
$\rho_{2}: {\Y}\rightarrow {\X}$.
By choosing
$\beta_{\eta}= e^{c_1({\Pcal}_{\eta})}\cdot \rho_1^{*}c_{\eta}$  we have

\begin{equation}\label{eq:1}
\sigma_X (F(c_{\eta}))=\rho_{2*} \sigma_{Y} 
(e^{c_1({\Pcal}_{\eta})}\cdot \rho_1^{*}c_{\eta}) \; .
\end{equation}

Therefore, in order to compute
$\sigma_X ({\Fcal}(c_{\eta}))$ we have to identify  
$\sigma_{Y}  (e^{c_1({\Pcal}_{\eta})} \cdot \rho_1^{*}c_{\eta})$.
We take the extension $e^{c_1({\Pcal})}$ of $e^{c_1({\Pcal}_{\eta})}$ 
and the  extension of
$\rho_1^{*}c_{\eta}$ given  by  $\rho_1^*c$, where $c$ is the Zariski closure of $c_{\eta}$ in $\X$.
Since $i_{\eta}:Y_{\eta} \to \Y$ is an open embedding 
and hence a flat map of dimension $0$, we have $i_{\eta}^*(e^{c_1({\Pcal})}\cdot  \rho_1^{*}c)=e^{c_1({\Pcal}_{\eta})}\cdot \rho_1^{*}c_{\eta} $, see \cite{F}, Proposition 2.3 (d).
In other words, the cycle $e^{c_1({\Pcal})}\cdot  \rho_1^{*}c$ extends the cycle 
$e^{c_1({\Pcal}_{\eta})}\cdot \rho_1^{*}c_{\eta}$ and 
hence 
$\sigma_{Y}  (e^{c_1({\Pcal}_{\eta})} \cdot \rho_1^{*}c_{\eta})=i_0^*(e^{c_1({\Pcal})}\cdot  \rho_1^{*}c)$.

Now, for any $k$-cycle $a$ on $\Y$ we have the identity
$$
i_0^*(c_1({\Pcal}) \cdot a)= 
c_1(\Pcal_0) \cdot i^*_0(a)
$$
in $A_{k-2}(Y_0)$, where $\Pcal_0=i^*_0{\Pcal}$ 
is the pull back of the line bundle and $i_0^*a$ 
the Gysin pull back to the divisor $Y_{0}$.
This follows from applying the  formula in  \cite{F}, Proposition 2.6 (e)
to $i_0: Y_0\to \Y $, with $D=Y_0$, 
$X=\Y$ and $L={\Pcal}$ the Poincar\'e bundle.
Hence
\begin{equation}
\label{eq:2}\sigma_{Y} (e^{c_1({\Pcal}_{\eta})}\cdot \rho_1^{*}c_{\eta}) = 
e^{c_1({\Pcal}_0)}\cdot  i^*_0(\rho_1^*c) \; .
\end{equation}
By the Moving Lemma (see \cite{F}, $\S 11.4$), we may choose the cycle 
$c$ on the regular ${\X}$ such that it intersects the singular locus $T$ 
of the central fiber properly.  
Since $T \subseteq X_0$ the cycle $c_0=j_0^*(c)$ meets $T$ properly by the following 
dimension argument. We have
${\rm dim}(c \cap T)={\rm dim}(c_0 \cap T)$, 
hence
$$
\begin{aligned}
{\rm dim}(c_0 \cap T) &= {\rm dim}(c)+{\rm dim}(T)-{\rm dim}(X)\\
&=({\rm dim}(c)-1)+{\rm dim}(T)
 -({\rm dim}(X)-1)\\
&={\rm dim}(c_0)+{\rm dim}(T)-{\rm dim}(X_0).\\
\end{aligned} 
$$
Since $T$ is of  codimension $1$ in $X_0=\bar{\PP}$, saying that $c_0$ 
meets $T$ properly, is equivalent to saying that no component of $c_0$ 
is contained in $T$.
\begin{lemma} \label{le:onto}
There exists a cycle $\gamma$ on ${\PP}$ with $c_0=\nu_* \gamma $ 
that meets the sections ${\PP}_i$ for $i=1,2$ properly.
\end{lemma}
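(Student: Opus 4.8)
The plan is to start from the description of $X_0=\overline{\PP}$ as the gluing of $\PP_1$ to $\PP_2$ by the translation by $b$, and the resolution $\nu:\PP\to\overline{\PP}$. A cycle $c_0$ on $\overline{\PP}$ that meets $T$ properly (which by the dimension count just above means no component of $c_0$ is contained in $T$) can always be written as $\nu_*(\gamma)$ for some cycle $\gamma$ on $\PP$: for each irreducible component $W$ of $c_0$ not contained in $T$, the preimage $\nu^{-1}(W)$ maps birationally onto $W$ away from the singular locus, so take $\gamma$ to be the closure of $\nu^{-1}(W\smallsetminus T)$, with appropriate multiplicities, summed over $W$. This produces a first choice $\gamma$ with $\nu_*\gamma=c_0$, but a priori its components may sit inside the sections $\PP_1$ or $\PP_2$, or meet them improperly. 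The point of the lemma is to correct $\gamma$ without changing $\nu_*\gamma$.

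The key observation is that $\nu$ identifies $\PP_1$ with $\PP_2$ via the translation $t_b\colon\PP_1\cong B\to B\cong\PP_2$, so $\nu_*$ is insensitive to moving a cycle from $\PP_1$ to $\PP_2$: if $Z\subset \PP_1$ is a subvariety and $Z'=t_b(Z)\subset\PP_2$ its image, then $\nu_*[Z]=\nu_*[Z']$, and more generally $\nu_*(\gamma)=\nu_*(\gamma')$ whenever $\gamma$ and $\gamma'$ differ by such a transfer along the gluing. Hence the strategy is: first move any component of $\gamma$ contained in $\PP_1$ over to $\PP_2$ (or vice versa) using $t_b$; this does not affect $\nu_*\gamma=c_0$. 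Second, apply the Moving Lemma (Fulton, \S11.4) on the smooth projective variety $\PP$ to replace the remaining part of $\gamma$ by a rationally equivalent cycle meeting both $\PP_1$ and $\PP_2$ properly — here one must be slightly careful: the moving must be performed by a rational equivalence supported away from, or at worst transverse to, the gluing locus, so that pushing forward under $\nu$ the rational equivalence still yields a rational equivalence on $\overline{\PP}$ and $\nu_*\gamma$ is unchanged; alternatively, since we only need \emph{some} $\gamma$ with $\nu_*\gamma=c_0$ (not a fixed one), we may move $\gamma$ freely on $\PP$ and then simply check that $\nu_*$ of the moved cycle is still $c_0$, which holds because $\nu$ is an isomorphism over the dense open $\overline{\PP}\smallsetminus T$ and both cycles restrict to $c_0|_{\overline{\PP}\smallsetminus T}$ there (no component lost because we arranged no component lies over $T$).

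Concretely the steps are: (1) construct an initial $\gamma_0$ with $\nu_*\gamma_0=c_0$ from the given $c_0$ using that $\nu$ is birational and $c_0$ has no component in $T$; (2) use the translation $t_b$ to push any component of $\gamma_0$ lying in $\PP_1$ onto $\PP_2$, obtaining $\gamma_1$ with $\nu_*\gamma_1=c_0$ still; (3) by the Moving Lemma on the smooth $\PP$, write $\gamma_1\sim\gamma$ with $\gamma$ meeting $\PP_1$ and $\PP_2$ properly, and (4) verify $\nu_*\gamma=c_0$: since $\gamma$ and $\gamma_1$ agree on the complement of a proper closed subset and $\nu$ is an isomorphism over the complement of $T$, the pushforwards agree as cycles on $\overline{\PP}$ up to components supported on $T$; arranging in step (3) that no component of $\gamma$ maps into $T$ (possible since that is a proper-intersection condition with the codimension-one locus $q^{-1}(\text{pt})$-type strata, automatically satisfied for a generic representative) finishes the identification. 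The main obstacle is step (4) / the interface between the Moving Lemma and $\nu_*$: one must ensure the act of making $\gamma$ meet $\PP_1,\PP_2$ properly does not inadvertently create components over $T$ or alter $\nu_*\gamma$; this is handled by performing the moving with a rational equivalence generic relative to the finitely many loci $\PP_1$, $\PP_2$, and $q^{-1}(\nu(T))$ simultaneously, which is legitimate on the smooth projective $\PP$.
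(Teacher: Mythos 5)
Your step (1) is, by itself, essentially the paper's entire proof, and it already does everything the lemma asks. Set $A=\PP_1\cup\PP_2$, use the isomorphism $\overline{\PP}\setminus T\cong\PP\setminus A$ to view the support $W$ of $c_0\cap(\overline{\PP}\setminus T)$ inside $\PP\setminus A$ (this uses only that no component of $c_0$ lies in $T$, which was established just before the lemma), and take $\gamma=\overline{W}$ (componentwise, with multiplicities). Then $\nu_*\gamma$ and $c_0$ are cycles whose irreducible components agree on the dense open set $\overline{\PP}\setminus T$ and have no component supported on $T$ resp.\ $A$, hence they are equal. The point you miss is that this $\gamma$ \emph{automatically} meets $\PP_1$ and $\PP_2$ properly: it is the closure of a set disjoint from $A$, so no component of it is contained in either section, and since the $\PP_i$ are divisors in $\PP$ this is exactly proper intersection --- the same remark the paper makes for $T\subset X_0$ just before the lemma. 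Your steps (2)--(4) are therefore unnecessary; step (2) is in fact vacuous for the $\gamma_0$ produced in step (1).

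More importantly, step (4) as written is incorrect, so it is fortunate that you do not need it. A cycle $\gamma$ produced from $\gamma_1$ by the Moving Lemma is only rationally equivalent to $\gamma_1$; it does \emph{not} ``agree with $\gamma_1$ on the complement of a proper closed subset.'' Rational equivalence replaces the cycle wholesale, so $\nu_*\gamma$ would only be rationally equivalent to $c_0$, not equal to it as a cycle, and no choice of ``generic'' rational equivalence repairs this. The lemma is used in Lemma \ref{le:compair} through an argument about supports and irreducibility ($\rho_1^*c_0$ and $\sigma_*(\tau^*p_1^*\gamma)$ ``coincide outside the exceptional divisor, hence everywhere''), which requires the equality $c_0=\nu_*\gamma$ at the level of cycles, not of rational equivalence classes. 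Had the Moving Lemma really been needed to achieve proper intersection with $\PP_1$ and $\PP_2$, your argument would not deliver the statement; since it is not needed, your proof reduces to step (1) and is correct.
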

\begin{proof}
If $T$ is the singular locus of $\bar{\PP}$ and $A={\PP}_1\cup {\PP}_2$ 
its preimage in ${\PP}$, then $\bar{\PP}\backslash T \cong {\PP} \backslash A$. 
We may assume that the cycle $c_0$ is irreducible and we consider  
the support of $c_0\cap(\bar{\PP}\backslash T)$ as a subset $W$ of 
${\PP}\backslash A$.  Its Zariski closure $\gamma =\bar{W}$ is an 
irreducible cycle on ${\PP}$. Then $\nu_*\gamma$ is an irreducible cycle 
on $\bar{\PP}$ since  the map $\nu$ is a projective map.  Also,
$\nu_*\gamma \cap (\bar{\PP}\backslash T)= c_0 \cap(\bar{\PP}\backslash T)$,
hence $\nu_*\gamma $ is the Zariski closure of $c_0\cap(\bar{\PP}\backslash T)$
and so, by the irreducibility, we have $\nu_*\gamma=c_0$.
\end{proof}

\begin{lemma}\label{le:compair}
If $c_0=\nu_*\gamma$, then we have
$i_0^* \rho_1^*c=\sigma_*(\tau^*(p_1^*\gamma))$.
\end{lemma}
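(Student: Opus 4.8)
The plan is to trace the cycle $\rho_1^*c$ through the blow-up and normalization maps by comparing the two flat pullbacks involved. We have $\rho_1 = \rho_1' \circ \epsilon$ where $\rho_1' : \Y' = \X \times_S \X \to \X$ is the first projection and $\epsilon : \Y \to \Y'$ is the small blow-up of section~\ref{modelY}. First I would pull $c$ back along $\rho_1'$: since $\rho_1'$ is flat, $(\rho_1')^* c$ has the expected dimension and its restriction to the central fibre $Y_0'$ is the flat pullback of $c_0 = j_0^* c$ along the first projection $p_1' : Y_0' \to X_0 = \bar\PP$; this uses the compatibility of flat pullback with Gysin restriction to the Cartier divisor $Y_0'$ (\cite{F}, Prop.~2.3(d) together with the divisor Gysin formalism). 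Then I would identify $\epsilon^* (\rho_1')^* c = \rho_1^* c$ and compute its restriction to $Y_0$: since $\epsilon$ is a small blow-up (an isomorphism in codimension one) and $Y_0$, $Y_0'$ are the corresponding Cartier divisors, $i_0^* \rho_1^* c = \epsilon_0^* \bigl( (i_0')^* (\rho_1')^* c \bigr)$ where $\epsilon_0 : Y_0 \to Y_0'$ is the induced map; the key point is that taking closures commutes appropriately because $\epsilon$ does not alter the generic fibre and the central fibre computation is governed by properness and flatness.

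The second half is to push through the normalization $\sigma : \tilde V \to V = Y_0$. Here I would use the commutative diagram from the proof of Theorem~\ref{pb}: $\tilde V$ is a component of the special fibre of the relatively complete model $\tilde P'$, it carries the blow-down map $\tau : \tilde V \to \PP \times \PP$, and $\sigma \circ$ (nothing) relates to $\tau$ via the identifications $\PP \times \PP = $ (normalization of $Y_0'$ restricted to the relevant component). The relation $i_0^* \rho_1^* c = \sigma_*(\tau^* p_1^* \gamma)$ should then follow from: (a) on $\tilde V$ the composite $\tilde V \xrightarrow{\tau} \PP \times \PP \xrightarrow{p_1} \PP$ is the lift of the first projection, so $\tau^* p_1^* \gamma$ is the flat pullback of $\gamma$; (b) the normalization square relating $\sigma$, $\tau$, $\nu$ and the first projections commutes, so that pushing $\tau^* p_1^* \gamma$ forward by $\sigma$ gives $p_1'^* \nu_* \gamma = p_1'^* c_0$ by the projection/flat-base-change compatibility of proper pushforward and flat pullback (\cite{F}, Prop.~1.7), using $c_0 = \nu_* \gamma$; and (c) matching this against the computation of $i_0^* \rho_1^* c = \epsilon_0^*(p_1')^* c_0$ from the first paragraph — the maps $\epsilon_0$ and $\sigma$ land on the same normalized component, and since $\gamma$ (by Lemma~\ref{le:onto}) meets $\PP_1, \PP_2$ properly, the proper transform under $\tau$ equals the total transform, so no exceptional-divisor corrections from $E_{12}, E_{21}$ appear.

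The main obstacle will be (c): carefully reconciling the two normalizations/blow-downs and showing there is genuinely no contribution from the exceptional loci $E_{12}, E_{21}$ of $\tau$ nor from the blow-up centre $\Pi$ of $\epsilon$. Concretely, $\rho_1^* c$ might a priori acquire a component supported on $\epsilon$-exceptional locus or, after restricting to $Y_0$ and normalizing, a component along $E_{12} \cup E_{21}$; one must rule this out. I expect this is where the properness hypothesis on $c$ (arranged via the Moving Lemma so that $c_0$ meets $T$, equivalently $\gamma$ meets $A = \PP_1 \cup \PP_2$, properly) does the work: since $\rho_1$ restricted to a neighbourhood of the exceptional locus is, in local coordinates, controlled by the equations $x_1 x_2 = t$, $u x_2' - v x_1 = 0$, $u x_2 - v x_1' = 0$ of section~\ref{modelY}, a dimension count shows the pullback of a cycle meeting $\PP_1,\PP_2$ properly has no component in the exceptional fibre, so the closure/proper-transform operations agree with the naive flat pullbacks. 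Once that is settled, the lemma is a formal diagram chase with flat pullbacks and proper pushforwards in the Chow groups, as above.
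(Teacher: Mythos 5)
Your proposal follows essentially the same route as the paper: reduce to the special fibre via the compatibility of the divisor Gysin map with the flat pullback $\rho_1^*$, and then use the proper intersection of $\gamma$ with $\PP_1,\PP_2$ (equivalently of $c_0$ with $T$) in a dimension count to rule out components supported on the exceptional loci, so that the two cycles agree because they agree on a dense open subset. The paper packages the final step by observing that both $\rho_1^*c_0$ and $\sigma_*(\tau^*p_1^*\gamma)$ are irreducible and coincide off the exceptional divisor of $V$; the only point to repair in your write-up is the intermediate operation $\epsilon_0^*$, which is not defined on cycle classes since $\epsilon_0$ is neither flat nor a regular embedding --- instead one uses directly that $\rho_1=\rho_1'\circ\epsilon$ itself is flat (miracle flatness for the map of smooth varieties $\Y\to\X$ with equidimensional fibres), as the paper does.
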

\begin{proof}
We denote the restriction of $\rho_i$ to the special fibre again by $\rho_i$. 
Then we have $i_0^* \rho_1^*c=\rho_1^*c_0$ since $\rho_1$ is a flat map and $i_0, j_0$ are regular 
embeddings (see \cite{F}, Theorem 6.2 (b) and Remark 6.2.1).
We will use the following commutative diagram
\begin{displaymath}
\begin{xy}
\xymatrix{  
&   \tilde{V} \ar[dl]_{\tau}\ar[dd] \ar[r]^{\sigma} & V \ar[dr]^{\epsilon}\ar[dd]_{\rho_i} & \\
{\PP} \times {\PP}  \ar[dr]_{p_i} & &  & \overline{\PP}\times \overline{\PP} \ar[dl]^{\rho^{\prime}_i} \\
& {\PP}\ar[r]^{\nu} & \overline{\PP} &  & \\
}
\end{xy}
\end{displaymath}
We may assume that $c_0$  and $\gamma $ are irreducible $k$-cycles.  
We claim that $\rho_1^*c_0$ is irreducible. Indeed, the map $\rho_1$ is a flat 
map of relative dimension $g$. The cycle $\rho_1^*c_0$ is then a cycle of 
pure dimension $k+g$ and contains the proper transform of 
$(\rho^{\prime}_1)^*c_0$ 
and that is an irreducible cycle. Any other irreducible component of  
$\rho_1^*c_0$ must have support on the preimage of  $T$. But since the cycle $c_0$ intersects 
$T$ along a $k-1$-cycle, there is no irreducible component of $\rho_1^*c_0$ 
on the preimage of $T$. 
On the other hand, since $\gamma $ meets the sections $\PP _i$ properly,
the cycle $\tau^*p_1^*\gamma$ is an irreducible cycle, and hence so is 
$\sigma_*(\tau^*p_1^*\gamma) $. But as $\rho_1^*c_0$ and 
$\sigma_*(\tau^*p_1^*\gamma) $ coincide outside the exceptional divisor of $V$, 
they have to coincide everywhere.
\end{proof}

\begin{proposition}
\label{prop:specialization} We have
$\sigma_X({\Fcal}(c_{\eta}))=\rho_{2*}(e^{c_1({\Pcal}_0)} \cdot 
\sigma_*(\tau^*p_1^*\gamma))$.
\end{proposition}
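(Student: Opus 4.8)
```latex
The plan is to assemble Proposition~\ref{prop:specialization} by stringing
together the identities already established, so essentially no new
geometry is needed --- only a careful bookkeeping of specialization maps.
First I would recall the starting point, equation~\eqref{eq:1}, which says
$$
\sigma_X(F(c_{\eta}))=\rho_{2*}\,\sigma_Y\bigl(e^{c_1({\Pcal}_{\eta})}\cdot\rho_1^*c_{\eta}\bigr).
$$
The whole content is therefore to identify the cycle
$\sigma_Y(e^{c_1({\Pcal}_{\eta})}\cdot\rho_1^*c_{\eta})$ on $Y_0$.

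Next I would invoke the chosen extensions: $e^{c_1({\Pcal})}$ extends
$e^{c_1({\Pcal}_{\eta})}$ and $\rho_1^*c$ extends $\rho_1^*c_{\eta}$ (the
latter because $c$ is the Zariski closure of $c_{\eta}$ in $\X$ and
$\rho_1$ is flat), so that, since $i_{\eta}$ is an open embedding, the
product $e^{c_1({\Pcal})}\cdot\rho_1^*c$ restricts on the generic fibre to
$e^{c_1({\Pcal}_{\eta})}\cdot\rho_1^*c_{\eta}$; hence by the definition of
the specialization map,
$$
\sigma_Y\bigl(e^{c_1({\Pcal}_{\eta})}\cdot\rho_1^*c_{\eta}\bigr)=i_0^*\bigl(e^{c_1({\Pcal})}\cdot\rho_1^*c\bigr).
$$
Then I would apply the projection-type identity
$i_0^*(c_1({\Pcal})\cdot a)=c_1({\Pcal}_0)\cdot i_0^*a$ (from
\cite{F}, Prop.~2.6(e), with $D=Y_0$, $L={\Pcal}$), iterated over the
terms of $e^{c_1({\Pcal})}$, to pull the Chern-class factor outside the
Gysin map and obtain equation~\eqref{eq:2}:
$$
i_0^*\bigl(e^{c_1({\Pcal})}\cdot\rho_1^*c\bigr)=e^{c_1({\Pcal}_0)}\cdot i_0^*(\rho_1^*c).
$$

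To finish, I would substitute Lemma~\ref{le:compair}, which gives
$i_0^*(\rho_1^*c)=\sigma_*(\tau^*p_1^*\gamma)$ --- this is where
Lemma~\ref{le:onto} enters, guaranteeing the existence of a representative
$\gamma$ on $\PP$ with $c_0=\nu_*\gamma$ meeting the sections ${\PP}_i$
properly, which in turn is what makes the comparison of $\rho_1^*c_0$ with
$\sigma_*(\tau^*p_1^*\gamma)$ go through outside the exceptional divisor.
Combining the three displays and then applying $\rho_{2*}$ yields
$$
\sigma_X(F(c_{\eta}))=\rho_{2*}\bigl(e^{c_1({\Pcal}_0)}\cdot\sigma_*(\tau^*p_1^*\gamma)\bigr),
$$
which is the assertion. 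I do not expect a genuine obstacle here: the proof
is a concatenation of results already proved in Sections~\ref{sec:Poincare}
and the present section. The one place to be careful is the compatibility
of the Gysin map $i_0^*$ with the exponential-series product --- i.e.\
making sure that \cite{F}, Prop.~2.6(e) is applied term by term in the
correct codimension and that the various maps ($\rho_i$, $i_0$, $j_0$) are
regular embeddings resp.\ flat, as already noted at the start of this
section, so that Theorem~6.2(b) and Remark~6.2.1 of \cite{F} legitimately
apply in Lemma~\ref{le:compair}.
```
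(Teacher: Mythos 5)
Your proposal is correct and follows exactly the paper's own argument: equation (1) reduces everything to computing $\sigma_Y(e^{c_1(\Pcal_\eta)}\cdot\rho_1^*c_\eta)$, equation (2) identifies this as $e^{c_1(\Pcal_0)}\cdot i_0^*(\rho_1^*c)$, and Lemma \ref{le:compair} substitutes $\sigma_*(\tau^*p_1^*\gamma)$ for $i_0^*(\rho_1^*c)$. The paper's proof is just this concatenation, so no further comment is needed.
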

\begin{proof}
By equation (\ref{eq:2}) and Lemma \ref{le:compair} we have
\begin{equation}
\label{eq:4}
 \sigma _{Y} (e^{c_1({\Pcal}_{\eta})}\cdot \rho_1^{*}c_{\eta}) = 
e^{c_1({\Pcal}_0)}
\cdot  \sigma_*\tau^*(p_1^*\gamma )   \; .
\end{equation}
The result follows from equation (\ref{eq:1}).
\end{proof}
In order to calculate the limit of the Fourier-Mukai transform we are thus reduced to a calculation in the special fibre.
\end{section} 
\begin{section}{A calculation in the special fibre - Proof of the main theorem}
\label{specialfiber}

Recall the normalization map $\sigma:\tilde{V} \to V$. Suppose we have
a cycle $\rho$ on $\tilde{V}$ with $\sigma_*{\rho}=c_0$. We can consider
the intersection $c_1({\Pcal}_0)^k \cdot c_0$, that is a successive 
intersection of a cycle with a Cartier divisor on the singular variety $V$.
On the other hand we have 
the cycle $\sigma_*(c_1(\sigma^*{\Pcal}_0)^k \cdot \rho)$
and the projection formula (\cite{F}, Proposition 2.5 (c)) implies that
$$
c_1({\Pcal}_0)^k \cdot c_0=\sigma_*(c_1(\sigma^*{\Pcal}_0)^k \cdot \rho).
$$
Now we will use the following diagram of maps.
\begin{displaymath}
\begin{xy}
\xymatrix{  
&&& \tilde{V}\ar[r]^{\sigma}\ar[d]^{\tau} &V \\
&&& {\PP}\times {\PP}\ar[dll]_{p_1} \ar[dl]^{\alpha_2} \ar[dd]^{q\times q}
 \ar[dr]_{\alpha_1}
\ar[drr]^{p_2} \\
\overline{\PP} & {\PP}\ar[dr]_q \ar[l]^{\nu}  &{\PP}\times B 
\ar[dr]^{\beta_1}\ar[l]^{\kappa_1}&& B \times {\PP}\ar[dl]_{\beta_2} \ar[r]_{\kappa_2} & {\PP} \ar[dl]^q \ar[r]_{\nu} & \overline{\PP} \\
&&B  & B\times B\ar[l]^{q_1} \ar[r]_{q_2} & B \\
}
\end{xy}
\end{displaymath}

\begin{lemma}\label{ale:basicinters}
Let $x$ be a cycle on $B\times B$.   Then the following holds. 
\begin{enumerate}
\item  $p_{2*}((q\times q)^*x)=0$.  
\item  $p_{2*}( (q\times q)^*x \cdot p_1^*\eta)= q^*q_{2*}x $.
\end{enumerate}
\end{lemma}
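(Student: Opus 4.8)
The plan is to reduce both parts to the fact that $\alpha_1\colon\PP\times\PP\to B\times\PP$ is a $\PP^1$-bundle, together with one application of flat base change; all the maps needed already appear in the diagram above. Write $\alpha_1=q\times{\rm id}_{\PP}$, $\beta_2={\rm id}_B\times q\colon B\times\PP\to B\times B$ and $\kappa_2\colon B\times\PP\to\PP$ for the projection onto the second factor, so that $p_2=\kappa_2\circ\alpha_1$ and $q\times q=\beta_2\circ\alpha_1$. Since $\alpha_1$ is the base change of $q\colon\PP\to B$ along the first projection ${\rm pr}_1\colon B\times\PP\to B$, it presents $\PP\times\PP$ as the projective bundle $\PP({\rm pr}_1^{*}(J\oplus\Ocal_B))$ over $B\times\PP$, and its relative hyperplane class is $p_1^{*}\eta$; concretely, using $\Ocal(1)\cong\Ocal(\PP_1)$ one has $p_1^{*}\eta=[\PP_1\times\PP]$, and $\alpha_1$ restricts to an isomorphism $\PP_1\times\PP\xrightarrow{\ \sim\ }B\times\PP$ since $q|_{\PP_1}\colon\PP_1\to B$ is an isomorphism.

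From this I would first record, for every class $w$ on $B\times\PP$, the two identities $\alpha_{1*}(\alpha_1^{*}w)=0$ and $\alpha_{1*}(p_1^{*}\eta\cdot\alpha_1^{*}w)=w$. These follow from the projection formula (\cite{F}, Ch.\ 8) once we know $\alpha_{1*}1=0$ (true for dimension reasons, as $A_{2g}(B\times\PP)=0$) and $\alpha_{1*}(p_1^{*}\eta)=\alpha_{1*}[\PP_1\times\PP]=[B\times\PP]=1$. Now part (1) is immediate: $(q\times q)^{*}x=\alpha_1^{*}\beta_2^{*}x$, hence $p_{2*}\big((q\times q)^{*}x\big)=\kappa_{2*}\alpha_{1*}\big(\alpha_1^{*}\beta_2^{*}x\big)=\kappa_{2*}(0)=0$. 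For part (2), the same factorization gives $p_{2*}\big((q\times q)^{*}x\cdot p_1^{*}\eta\big)=\kappa_{2*}\alpha_{1*}\big(p_1^{*}\eta\cdot\alpha_1^{*}\beta_2^{*}x\big)=\kappa_{2*}\big(\beta_2^{*}x\big)$, so it remains to identify $\kappa_{2*}\beta_2^{*}x$.

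To finish I would note that the square with vertical maps $\kappa_2\colon B\times\PP\to\PP$ and $q_2\colon B\times B\to B$ and horizontal maps $\beta_2$ and $q$ is cartesian — indeed $B\times\PP\cong\PP\times_B(B\times B)$ — with $q$ flat and $q_2$ proper, so flat base change (\cite{F}, Prop.\ 1.7) gives $\kappa_{2*}\beta_2^{*}x=q^{*}q_{2*}x$, which is part (2). The argument is essentially formal; the points that require care are just that $p_1^{*}\eta$ really is the relative hyperplane class of $\alpha_1$ (so that $\alpha_{1*}(p_1^{*}\eta)=1$) and that the base-change square is oriented so as to produce $q^{*}q_{2*}$ and not its transpose — which is why I would fix at the outset the identity $q\circ\kappa_2=q_2\circ\beta_2$, singling out $q_2$ as the relevant projection.
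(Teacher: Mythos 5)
Your proof is correct and follows essentially the same route as the paper: both factor $p_2=\kappa_2\circ\alpha_1$ and $q\times q=\beta_2\circ\alpha_1$, obtain part (1) from $\alpha_{1*}\alpha_1^*=0$, and reduce part (2) to the flat base change $\kappa_{2*}\beta_2^*=q^*q_{2*}$ together with the fact that $\eta$ has degree one on the fibres of $q$. The only cosmetic difference is in part (2): you apply the projection formula directly to $\alpha_1$ using $\alpha_{1*}(p_1^*\eta)=[B\times\PP]$ (via the section $\PP_1$), whereas the paper first transfers the computation to $\PP\times B$ through the base change $\alpha_{1*}\alpha_2^*=\beta_2^*\beta_{1*}$ and then invokes $q_*\eta=1$; the two bookkeepings are interchangeable.
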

\begin{proof} 
For (1) we observe that  $p_{2*}=\kappa_{2*} \alpha_{1*}$,
and $(q\times q)^*= \alpha_1^* \beta_2^*$ and $\alpha_{1*} \alpha_1^*=0$.
For (2) we use the identities  
$$
\begin{aligned}
p_{2*}((q\times q)^*x \cdot p_1^*\eta)
&=p_{2*}(\alpha_2^*\beta_1^* x \cdot \alpha_2^* \kappa_1^*\eta)
=p_{2*}\alpha_2^*(\beta_1^*x \cdot \kappa_1^*\eta)\\
&= \kappa_{2*}\alpha_{1*}\alpha_2^*(\beta_1^*x \cdot \kappa_1^*\eta)= 
\kappa_{2*}\beta_2^*\beta_{1*}(\beta_1^*x \cdot \kappa_1^*\eta)\\
&=\kappa_{2*}\beta_2^*(x \cdot \beta_{1*}\kappa_1^*\eta)
=q^*q_{2*}(x \cdot q_1^*q_*\eta)= q^*q_{2*}x.\\
\end{aligned}
$$
\end{proof}
Consider the following diagram of maps
\begin{displaymath}
\begin{xy}
\xymatrix{
{\PP}_i \ar[d]_{\lambda_i} & {\PP}_i \times {\PP}_j\ar[d]_{\lambda_{ij}} 
& E_{ij} \ar[d]_{\epsilon_{ij}}\ar[l]_{\pi_{ij}} \\
{\PP}\ar[d]_q &   {\PP}\times {\PP}\ar[l]_{p_1} \ar[d]_{q\times q} & \tilde{V} \ar[l]_{\tau}\ar[d]_{\sigma} \\
B & B \times B \ar[l]_{q_1} \ar[d]_{q_2} & V\\
 & B \\
}
\end{xy}
\end{displaymath}
where $p_i,\,q_i$ are the projections to the $i$th factor, $\pi_{ij}$ 
the canonical map of the projective bundle $E_{ij}$ and  the maps
$\lambda _i$,  $\lambda_{ij}$ and $\epsilon_{ij}$ the natural inclusions.
The map $(q\times q) \circ \lambda_{ij}$ is an isomorphism.

By the adjunction formula, the normal bundles to 
${\PP}_1,\, {\PP}_2$    are $N_{{\PP}_1}({\PP})= J $ and $N_{{\PP}_2}({\PP})=J^{-1}$. 
The exceptional divisors $E_{12}$ and $E_{21}$ are 
projective bundles over the blowing up loci ${\PP}_i \times {\PP}_j$.  
By identifying  $\PP_i \times \PP_j$ with $B \times B$, via the map $(q \times q) \circ \lambda_{ij}$, 
 we  have $E_{12}=\PP(q_1^*J^{-1}\oplus q_2^*J)$ and $E_{21}=\PP(q_1^*J\oplus q_2^*J^{-1})$.
We set $\xi_{ij} =c_1(O(1))$ on $E_{ij}$. By standard theory [\cite{Ha}, ch.\ II, Theorem 8.24 (c)] 
we have $\epsilon_{ij}^*E_{ij}=-\xi_{ij}$. 

We now introduce the notation 
$$
\gamma:=c_1(J),\;\; \gamma_i=q_i^*\gamma,\;\; \eta_i=p_i^*\eta,\;\; i=1,2.
$$
Note that $\gamma $  is algebraically equivalent to $0$, but not rationally
equivalent to~$0$. We have the quadratic relations
$$
\xi_{ij}^2 + {\pi^{\prime}}^*_{ij}(\gamma_i - \gamma_j )\cdot \xi_{ij} -
{\pi^{\prime}}^*_{ij} (\gamma_i \gamma_j) =0,
$$
where $\pi^{\prime}_{ij}: E_{ij} \to B \times B$ is the natural map.

\begin{lemma}\label{xiphilemma}
Suppose that $\xi$ satisfies the relation  $\xi^2+(a-b)\xi-ab=0$. Then, with
$\phi_{k}=\sum_{m=0}^{k-1} (-1)^m a^mb^{k-1-m}$ we have
$\xi^k=\phi_k\xi +ab\phi_{k-1} $ for any $k \geq 1$ (where we put
$\phi_0=0$).
\end{lemma}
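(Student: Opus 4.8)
The plan is to prove the identity by a straightforward induction on $k$, after first isolating the only non-formal ingredient, namely a two-term linear recurrence satisfied by the polynomials $\phi_k$.

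\textbf{Step 1: a recurrence for the $\phi_k$.} First I would establish that
$\phi_{k+1} = (b-a)\,\phi_k + ab\,\phi_{k-1}$ for all $k\geq 1$, with the stated convention $\phi_0=0$. This is immediate from the definition: expanding $b\phi_k$, $-a\phi_k$ and $ab\phi_{k-1}$ as sums of monomials of the shape $(-1)^m a^m b^{k-m}$ and reindexing, one finds that all terms $a^j b^{k-j}$ with $1\le j\le k-1$ cancel between $-a\phi_k$ and $ab\phi_{k-1}$, and what survives is exactly $\sum_{m=0}^{k}(-1)^m a^m b^{k-m}=\phi_{k+1}$. (Equivalently, one notes that $\xi^2+(a-b)\xi-ab=(\xi-b)(\xi+a)$, so that $\phi_k=(b^k-(-a)^k)/(a+b)$, and the recurrence is the familiar one for such sums.)

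\textbf{Step 2: the induction.} For $k=1$ we have $\phi_1=1$ and $\phi_0=0$, so $\xi^1=\phi_1\xi+ab\,\phi_0=\xi$, which is the claim. Assume now $\xi^k=\phi_k\xi+ab\,\phi_{k-1}$. Multiplying by $\xi$ and substituting $\xi^2=(b-a)\xi+ab$, which is the given quadratic relation rewritten, we get
$$
\xi^{k+1}=\phi_k\bigl((b-a)\xi+ab\bigr)+ab\,\phi_{k-1}\xi=\bigl((b-a)\phi_k+ab\,\phi_{k-1}\bigr)\xi+ab\,\phi_k .
$$
By Step 1 the coefficient of $\xi$ is $\phi_{k+1}$, while the constant term is $ab\,\phi_k$, so $\xi^{k+1}=\phi_{k+1}\xi+ab\,\phi_k$, completing the induction.

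The argument is entirely formal and takes place in any commutative ring (in the application $a,b$ are pullbacks of $\gamma_i,\gamma_j$ and $\xi=\xi_{ij}$, which commute). The only point requiring any care is the bookkeeping in Step 1 — the reindexing of the three sums and the role of the convention $\phi_0=0$ in making the $k=1$ case of the recurrence correct — so that is the ``hard part'', such as it is; once the recurrence is in hand, Step 2 is mechanical.
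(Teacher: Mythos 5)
Your proof is correct and is essentially identical to the paper's: the same induction on $k$ using $\xi^2=(b-a)\xi+ab$ together with the recurrence $\phi_{k+1}=(b-a)\phi_k+ab\,\phi_{k-1}$, which the paper leaves to the reader and you verify explicitly. No issues.
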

\begin{proof} 
Assuming by induction that $\xi^k=\phi_k\xi +ab\phi_{k-1}$
we find  
$$
\xi^{k+1}=\phi_k\xi^2+ab\phi_{k-1}\xi=((b-a)\phi_k+ab\phi_{k-1})\xi+ ab\phi_k,
$$
so the result follows by induction from the recurrence
$\phi_{k+1}=(b-a)\phi_k+ab\phi_{k-1}$
that can be left to the reader.
\end{proof}

Applying the above for the classes $\xi_{ij}$ of the bundles 
$E_{ij}$, considered as bundles over 
$B \times B$ via the isomorphism $(q \times q) \circ \lambda_{ij}$, 
we get, by choosing 
$$
\phi_k = \sum_{m=0}^{k-1} (-1)^m \gamma_1^m \gamma_2^{k-1-m}, 
$$
that
$$
\begin{aligned}
\xi_{12}^k=& {\pi^{\prime}}^*_{12} \phi_k \cdot \xi_{12} \, + 
{\pi^{\prime}}^*_{12}( \gamma_1  \gamma_2 
\phi_{k-1}), \\  
\xi_{21}^k=&(-1)^{k+1} {\pi^{\prime}}^*_{21} \phi_k \cdot \xi_{21}  
\,+\, (-1)^k {\pi^{\prime}}^*_{21}(\gamma_1 \gamma_2 \phi_{k-1})\, .
\end{aligned}
$$ 

We view now the bundles $E_{ij}$ as bundles over  
${\PP}_i \times {\PP}_j$ and, for any $k \geq 0$, we write 
$\xi_{ij}^k=\pi_{ij}^*A_{ij}(k)\xi_{ij}+\pi_{ij}^*B_{ij}(k)$, for some cycles 
$A_{ij}(k),\, B_{ij}(k)$ on $\PP_i \times \PP_j$. By the above relations
we have
$$
(q\times q)_* \lambda _{ij*} A_{ij}(k) =(-1)^{(k+1)j} \phi_k \, . 
$$

\begin{lemma}\label{ale:lambda}
We have
$$
\lambda_{ij*}A_{ij}(k)=(-1)^{(k+1)j}[ (q\times q)^*\phi_k \cdot  \eta_1
 \eta_2 - (q\times q)^*(\phi_k \, \gamma_j)  \cdot \eta_i ]\, .
$$
\end{lemma}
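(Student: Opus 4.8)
The plan is to pin down the class $A_{ij}(k)$ on $\PP_i\times\PP_j$ first and then push it forward to $\PP\times\PP$ by the projection formula. Write $f=(q\times q)\circ\lambda_{ij}:\PP_i\times\PP_j\to B\times B$; this is an isomorphism. The identity displayed just before the lemma reads $f_*A_{ij}(k)=(-1)^{(k+1)j}\phi_k$ (since $(q\times q)_*\lambda_{ij*}=f_*$), so applying the inverse pullback $f^*=\lambda_{ij}^*(q\times q)^*$ to both sides (using $f^*f_*=\mathrm{id}$ for an isomorphism) gives
$$
A_{ij}(k)=(-1)^{(k+1)j}\,\lambda_{ij}^*\bigl((q\times q)^*\phi_k\bigr)\,.
$$
Equivalently one reads this off directly by comparing $\xi_{ij}^k=\pi_{ij}^*A_{ij}(k)\,\xi_{ij}+\pi_{ij}^*B_{ij}(k)$ with the given $\xi_{ij}^k=(-1)^{(k+1)j}\,{\pi^{\prime}}^*_{ij}\phi_k\cdot\xi_{ij}+\cdots$, using $\pi^{\prime}_{ij}=f\circ\pi_{ij}$ and the injectivity of $\pi_{ij}^*$.

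Next, $\lambda_{ij}$ is a regular embedding, $\PP_i\times\PP_j$ being a smooth closed subvariety of the smooth variety $\PP\times\PP$, so the projection formula (\cite{F}, Proposition 2.5 (c)) gives
$$
\lambda_{ij*}A_{ij}(k)=(-1)^{(k+1)j}\,(q\times q)^*\phi_k\cdot\lambda_{ij*}[\PP_i\times\PP_j]\,.
$$
Here $\lambda_{ij*}[\PP_i\times\PP_j]$ is the cycle class of the image, which is the transverse intersection $(\PP_i\times\PP)\cap(\PP\times\PP_j)$ and hence equals $p_1^*[\PP_i]\cdot p_2^*[\PP_j]$. Finally I would substitute $[\PP_1]=\eta$ (from $\Ocal(1)\cong\Ocal(\PP_1)$) and $[\PP_2]=\eta-q^*\gamma$ (from $\Ocal(\PP_1)\cong\Ocal(\PP_2)\otimes q^*J$); for the two cases $(i,j)\in\{(1,2),(2,1)\}$ that occur, exactly the factor equal to $\PP_2$ contributes a correction, so that $p_1^*[\PP_i]\cdot p_2^*[\PP_j]=\eta_1\eta_2-(q\times q)^*\gamma_j\cdot\eta_i$. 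Inserting this into the previous display yields
$$
\lambda_{ij*}A_{ij}(k)=(-1)^{(k+1)j}\bigl[(q\times q)^*\phi_k\cdot\eta_1\eta_2-(q\times q)^*(\phi_k\,\gamma_j)\cdot\eta_i\bigr]\,,
$$
which is the assertion.

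The computation is short and I do not expect a genuine obstacle; what needs care is the bookkeeping. One must get the sign $(-1)^{(k+1)j}$ right (it is already built into the $\phi_k$-expansions of $\xi_{12}^k$ and $\xi_{21}^k$, which differ because $N_{\PP_1}(\PP)=J$ while $N_{\PP_2}(\PP)=J^{-1}$), and one must check that it is $\gamma_j$ paired with $\eta_i$, rather than $\gamma_i$ with $\eta_j$, that survives — which reflects exactly the asymmetry $[\PP_1]=\eta$ versus $[\PP_2]=\eta-q^*\gamma$.
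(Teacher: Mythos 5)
Your proof is correct and follows essentially the same route as the paper: identify $A_{ij}(k)=(-1)^{(k+1)j}\lambda_{ij}^*(q\times q)^*\phi_k$ via the isomorphism $(q\times q)\circ\lambda_{ij}$, apply the projection formula, and compute $\lambda_{ij*}[{\PP}_i\times {\PP}_j]=p_1^*[{\PP}_i]\cdot p_2^*[{\PP}_j]=\eta_1\eta_2-(q\times q)^*\gamma_j\cdot \eta_i$. The sign $(-1)^{(k+1)j}$ and the $\gamma_j$-versus-$\eta_i$ bookkeeping are handled exactly as in the paper's argument.
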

\begin{proof} We let $\psi_{ij}= (q \times q)\circ\lambda_{ij}: 
{\PP}_i \times {\PP}_j \to B\times B$ be the natural isomorphism. 
We then have the identity
$$
\lambda_{ij*}A_{ij}(k)= \lambda_{ij*}(\psi_{ij}^* \psi_{ij*}A_{ij}(k))=
(q\times q)^*\psi_{ij*}A_{ij}(k) \cdot 
\lambda_{ij*}1_{\PP_i \times \PP_j}.
$$ 
But $\lambda_{ij*}1_{\PP_i \times \PP_j}
= p_1^*\PP_i \cdot p_2^* \PP_j
= \eta_i( \eta_j- p_j^* q^* \gamma ) = \eta_1 \eta_2- \eta_i \cdot (q\times q)^*\gamma_j$ 
and the result follows.
\end{proof}

\begin{lemma} \label{lem:basic}
For a cycle class $x=q^*z +q^*w \cdot \eta$  on $\PP$ the
cycle  class $\tau_*(\tau^*p_1^*x \cdot (E^k_{12}+E_{21}^k))$
for $k \geq 1$ is given by
$$
\begin{aligned}
&\sum_{m=0}^{k-2} (-1)^m  \{(q\times q)^* q_1^*[(( (-1)^{k+1}-1)z +(-1)^{k+1}w 
\gamma)  \,  \gamma^{m}]  \cdot  \eta_1  \eta_2 \\
&+(-1)^k (q\times q)^*q_1^*[( z+w \gamma)\,  \gamma^{m}] \cdot  \eta_1 \cdot p_2^* q^*\gamma 
+(q\times q)^*q_1^*( z  \gamma^{m+1}) \cdot  \eta_2\}\cdot p_2^* q^* \gamma^{k-2-m}. \\
\end{aligned}
$$
\end{lemma}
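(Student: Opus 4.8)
The plan is to push the whole computation down onto the two exceptional $\PP^1$-bundles $E_{12}$ and $E_{21}$, apply Lemma~\ref{ale:lambda} there, and then expand in the Chow ring of $\PP\times\PP$.

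First I would rewrite the self-intersections. Since $E_{ij}$ is an effective Cartier divisor on the smooth $\tilde V$ with $\epsilon_{ij}^*E_{ij}=-\xi_{ij}$, the self-intersection formula gives $E_{ij}^{k}=(-1)^{k-1}\epsilon_{ij*}(\xi_{ij}^{k-1})$ for $k\ge1$. Using the projection formula for the regular embedding $\epsilon_{ij}$, the relation $\tau\circ\epsilon_{ij}=\lambda_{ij}\circ\pi_{ij}$ from the diagram, and then $\tau_*\epsilon_{ij*}=\lambda_{ij*}\pi_{ij*}$, the quantity $\tau_*(\tau^*p_1^*x\cdot E_{ij}^{k})$ equals $(-1)^{k-1}\lambda_{ij*}\pi_{ij*}(\pi_{ij}^*\lambda_{ij}^*p_1^*x\cdot\xi_{ij}^{k-1})$. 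Substituting $\xi_{ij}^{k-1}=\pi_{ij}^*A_{ij}(k-1)\cdot\xi_{ij}+\pi_{ij}^*B_{ij}(k-1)$ and using $\pi_{ij*}\xi_{ij}=1$, $\pi_{ij*}1=0$ for the $\PP^1$-bundle $\pi_{ij}$, together with the projection formula for $\lambda_{ij}$, this collapses to
$$
\tau_*\bigl(\tau^*p_1^*x\cdot(E_{12}^{k}+E_{21}^{k})\bigr)=(-1)^{k-1}\,p_1^*x\cdot\bigl(\lambda_{12*}A_{12}(k-1)+\lambda_{21*}A_{21}(k-1)\bigr).
$$

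Next I would insert Lemma~\ref{ale:lambda} with $k$ replaced by $k-1$; the prefactor $(-1)^{(k-1+1)j}$ there is $1$ for $(ij)=(12)$ and $(-1)^{k}$ for $(ij)=(21)$, so the right-hand side above becomes an explicit combination of $(q\times q)^*\phi_{k-1}\cdot\eta_1\eta_2$, $(q\times q)^*(\phi_{k-1}\gamma_2)\cdot\eta_1$ and $(q\times q)^*(\phi_{k-1}\gamma_1)\cdot\eta_2$, with $\phi_{k-1}=\sum_{m=0}^{k-2}(-1)^m\gamma_1^m\gamma_2^{k-2-m}$. Then I would write $p_1^*x=(q\times q)^*q_1^*z+(q\times q)^*q_1^*w\cdot\eta_1$ (using $q\circ p_1=q_1\circ(q\times q)$), multiply out, and reduce every $\eta_1^2$ or $\eta_2^2$ via $\eta_i^2=\eta_i\cdot(q\times q)^*\gamma_i$, which is the $p_i$-pullback of the relation $\eta^2=\eta\cdot q^*c_1(J)$ defining $\mathrm{CH}^*(\PP)$. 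Finally, collecting by $\eta$-monomial, substituting $\gamma_i=q_i^*\gamma$, and pulling the powers of $\gamma_2$ outside as $p_2^*q^*\gamma^{k-2-m}$ through $(q\times q)^*q_2^*\gamma=p_2^*q^*\gamma$, the three collected pieces match the three summands of the asserted formula once the scalar signs are simplified (for instance $(-1)^{k-1}(1+(-1)^k)=(-1)^{k+1}-1$ gives the $z$-coefficient in the $\eta_1\eta_2$-term, $-(-1)^{k-1}=(-1)^{k}$ the $\eta_1$-coefficient, and $-(-1)^{2k-1}=1$ the $\eta_2$-coefficient).

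All the geometry lives in the first step; the remainder is bookkeeping, and the main obstacle is simply keeping the signs straight — in particular the interaction of the overall $(-1)^{k-1}$ with the $(ij)=(21)$ sign $(-1)^{k}$, and tracking which of $\gamma_1,\gamma_2$ each monomial carries. Two checks pin everything down: $k=1$, where both sides vanish since the sum is empty and $A_{ij}(0)=0$; and $k=2$, where $A_{ij}(1)=1$ and $\lambda_{ij*}1_{\PP_i\times\PP_j}=p_1^*\PP_i\cdot p_2^*\PP_j$ can be checked directly against the explicit formula.
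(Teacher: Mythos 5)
Your proposal is correct and follows essentially the same route as the paper's proof: the identity $E_{ij}^{k}=(-1)^{k-1}\epsilon_{ij*}(\xi_{ij}^{k-1})$ together with $\pi_{ij*}\xi_{ij}=1_{\PP_i\times\PP_j}$ reduces the computation to $(-1)^{k-1}\,p_1^*x\cdot\lambda_{ij*}A_{ij}(k-1)$, after which Lemma~\ref{ale:lambda}, the relation $\eta_i^2=\eta_i\cdot(q\times q)^*\gamma_i$, and the expansion of $\phi_{k-1}$ yield the stated formula. Your sign bookkeeping (including the vanishing at $k=1$ via $A_{ij}(0)=0$) matches the paper's computation exactly.
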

Note that for $k=1$ the above sum is zero. 

\begin{proof}
Since $\epsilon_{ij}^* E_{ij}=-\xi_{ij}$ 
we have  $E_{ij}^k=(-1)^{k-1} \epsilon_{ij*}\xi_{ij}^{k-1}$. 
Therefore
$$
\begin{aligned}
\tau_*(\tau^*p_1^*x \cdot E^k_{ij})=& 
(-1)^{k-1}p_1^*x \cdot \tau_* \epsilon_{ij*}\xi_{ij}^{k-1} \\
=&(-1)^{k-1} p_1^*x \cdot \lambda_{ij*}\pi_{ij*}(\pi_{ij}^*A_{ij}(k-1)\xi_{ij}+
\pi_{ij}^*B_{ij}(k-1)) \\
=&(-1)^{k-1} p_1^*x \cdot \lambda_{ij*}A_{ij}(k-1)\\
\end{aligned}
$$ 
since $\pi_{ij*}\xi_{ij}=1_{\PP_i \times \PP_j}$. Note that since $A_{ij}(0)=0$ the above calculation shows that 
$\tau_*(\tau^*p_1^*x \cdot E_{ij}) =0$.
By Lemma \ref{ale:lambda} 
and by using the relation 
$$
p_1^*x=(q\times q)^* q_1^* z +  (q\times q)^* q_1^* w \cdot \eta_1,
$$ 
we have
$$
\begin{aligned}
\tau_*(\tau^*p_1^*x \cdot E^k_{ij})= & (-1)^{k(j+1)+1} ((q\times q)^* q_1^* z +  (q\times q)^* q_1^* w \cdot \eta_1)  \\
& \cdot [(q\times q)^*\phi_{k-1} \cdot  \eta_1 \eta_2 - 
(q\times q)^*(\phi_{k-1} \gamma_j)  \cdot \eta_i ] \\
\end{aligned}
$$
and this equals
$$
\begin{aligned}
& (-1)^{k(j+1)+1} [(q\times q)^* (q_1^* z\cdot \phi_{k-1})\cdot  
\eta_1 \eta_2 - 
(q\times q)^*(q_1^* z \cdot \phi_{k-1} \gamma_j)  
\cdot \eta_i   \\
& + (q\times q)^* (q_1^*  w \cdot \phi_{k-1})
\cdot  \eta^2_1 \eta_2 -(q\times q)^*(q_1^*w\cdot 
\phi_{k-1} \gamma_j) \cdot  \eta_1 \eta_i ]  
\end{aligned}
$$
We then have, by using the formula $\eta^2= q^*\gamma \cdot \eta$,   that
$$                                                                             
\begin{aligned}
\tau_*(\tau^*p_1^*x \cdot E^k_{12})=  
  (-1)^{k+1} [(q\times q)^* (q_1^*( z+w \gamma) \cdot 
\phi_{k-1})\cdot  \eta_1 \eta_2 &\\
 - (q\times q)^*(q_1^*( z+w \gamma)\cdot  \phi_{k-1} ) 
\cdot  \eta_1\cdot  p_2^* q^*\gamma] & \\
\end{aligned}
$$
and
$$
\begin{aligned}
\tau_*(\tau^*p_1^*x \cdot E^k_{21})=  - & (q\times q)^* (q_1^*z \cdot \phi_{k-1})
\cdot  \eta_1 \eta_2 +
(q\times q)^*(q_1^*( z \, \gamma) \cdot \phi_{k-1} ) \cdot  \eta_2. \\
\end{aligned}
$$
Using 
$\phi_{k-1} = 
\sum_{m=0}^{k-2} (-1)^m \gamma_1^m \cdot \gamma_2^{k-2-m}  $ we deduce
the proposition.
\end{proof}

We state now the basic result of this section.

\begin{proposition} \label{aprop:FTq^*a}
Let $z,\, w $ be  cycles on $B$. Then we have
$$ 
p_{2*}\tau_*(e^{c_1 (\sigma^* \Pcal _0)} 
\cdot \tau^*(p_1^*(q^*z+q^*w \cdot \eta))=  q^*a+q^* b \cdot \eta,
$$ 
with $a$ and $b$ as in Theorem \ref{th: aeFT}.
\end{proposition}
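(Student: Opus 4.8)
The plan is to expand $e^{c_1(\sigma^*\Pcal_0)}$ as a polynomial in the two divisor classes that make it up, push everything down along $\tau$ first and then along $p_2$, and collect terms according to whether they land in $q^*(\mathrm{CH}^*B)$ or $q^*(\mathrm{CH}^*B)\cdot\eta$. Recall from Theorem \ref{pb} that $\sigma^*\Pcal_0\cong\tau^*(q\times q)^*P_B\otimes\Ocal(-E_{12}-E_{21})$, so $c_1(\sigma^*\Pcal_0)=\tau^*(q\times q)^*c_1(P_B)-E_{12}-E_{21}$. Since $E_{12}$ and $E_{21}$ are disjoint, $(E_{12}+E_{21})^k=E_{12}^k+E_{21}^k$ for $k\geq1$, and the binomial expansion gives
$$
e^{c_1(\sigma^*\Pcal_0)}=\tau^*(q\times q)^*e^{c_1(P_B)}\cdot\Big(1+\sum_{k\geq1}\frac{(-1)^k}{k!}(E_{12}^k+E_{21}^k)\Big).
$$
First I would multiply this by $\tau^*p_1^*(q^*z+q^*w\cdot\eta)$, use the projection formula to pull the factor $\tau^*(q\times q)^*e^{c_1(P_B)}$ through $\tau_*$, and then invoke Lemma \ref{lem:basic} to evaluate $\tau_*\big(\tau^*p_1^*x\cdot(E_{12}^k+E_{21}^k)\big)$ term by term, with $x=q^*z+q^*w\cdot\eta$. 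The $k=0$ term contributes $p_1^*x$ itself, whose $\tau_*$-pushforward is just $p_1^*x$ on $\PP\times\PP$.

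Next I would push forward along $p_2:\PP\times\PP\to\PP$. The $k=0$ term becomes $p_{2*}((q\times q)^*q_1^*z+(q\times q)^*q_1^*w\cdot\eta_1)$; by Lemma \ref{ale:basicinters}(1) the $z$-part dies and by (2) the $w$-part gives $q^*q_{2*}(q_1^*w)$, which one recognizes after unwinding as the contribution feeding the leading term $F_B(w)$ in $a$ once it is combined with the exponential factor $(q\times q)^*e^{c_1(P_B)}$ — indeed $p_{2*}\big((q\times q)^*(q_1^*w\cdot e^{c_1(P_B)})\cdot\eta_1\big)=q^*q_{2*}(q_1^*w\cdot e^{c_1(P_B)})=q^*F_B(w)$. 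For $k\geq1$ I would feed the three types of monomials produced by Lemma \ref{lem:basic} — those supported on $\eta_1\eta_2$, on $\eta_1\cdot p_2^*q^*\gamma$, and on $\eta_2$, each times a power of $p_2^*q^*\gamma$ — into Lemma \ref{ale:basicinters} again (after absorbing the extra $e^{c_1(P_B)}$ factor into the cycle class $x$ on $B\times B$), noting that $p_{2*}(\eta_1\eta_2\cdot(q\times q)^*y)$ and $p_{2*}(\eta_2\cdot(q\times q)^*y)$ are computed by part (2) and part (1) respectively, while the terms already carrying a $p_2^*q^*\gamma$ survive with that $q^*\gamma$ pulled out of the pushforward.

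Finally I would reorganize the resulting double sum. The contributions with no surviving $\eta$ assemble into $q^*a$ and those with one surviving $\eta$ into $q^*b\cdot\eta$; the reindexing $k-1=n+1$, $m\mapsto m$ together with the identification $q^*q_{2*}(q_1^*(\cdot)\cdot e^{c_1(P_B)})=q^*F_B(\cdot)$ and the collapse $q_{2*}(q_1^*u\cdot e^{c_1(P_B)})=F_B(u)$ for $u\in\mathrm{CH}^*B$ should turn the coefficients $\frac{(-1)^k}{k!}$ with $k=n+2$ into exactly the $\frac{(-1)^m}{(n+2)!}$ appearing in Theorem \ref{th: aeFT}, with the sign $(-1)^m$ coming from $\phi_{k-1}=\sum(-1)^m\gamma_1^m\gamma_2^{k-2-m}$ and the alternating $((-1)^{n+1}-1)$ in $b$ coming from the difference between the $E_{12}$ and $E_{21}$ contributions in Lemma \ref{lem:basic}. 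The main obstacle is bookkeeping: matching the three monomial types against the two parts of Lemma \ref{ale:basicinters}, keeping the $\gamma$-powers and the shifted summation indices aligned, and checking that the $w$-dependent pieces outside of $F_B(w)$ are precisely the $w\cdot c_1(J)$ terms in $a$ and $b$. None of the individual steps is hard, but the final index-matching must be done carefully to land on the stated closed form.
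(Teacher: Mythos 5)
Your proposal is correct and follows essentially the same route as the paper: expand $e^{-E_{12}-E_{21}}$ using $E_{12}\cdot E_{21}=0$, pull the factor $\tau^*(q\times q)^*e^{c_1(P_B)}$ through $\tau_*$ by the projection formula, evaluate the $k\geq 1$ terms with Lemma \ref{lem:basic}, push down by $p_2$ via Lemma \ref{ale:basicinters} (the $\eta_2$-monomials indeed dying by part (1), the $\eta_1\eta_2$ and $\eta_1\cdot p_2^*q^*\gamma$ ones surviving via part (2)), and reindex $k=n+2$. This matches the paper's proof step for step.
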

\begin{proof} We put $x= q^*z+q^*w \cdot \eta$. 
We  want to calculate
$$
p_{2*}\tau_*(e^{\tau^*(q\times q)^*c_1(P_B) -E_{12}-E_{21}}\cdot 
\tau^*(p_1^*x))
$$
which equals
$$
p_{2*}(e^{(q\times q)^*c_1(P_B)}\cdot \tau_* ( e^{-E_{12}-E_{21}}\cdot 
\tau^*p_1^*x)).
$$
Since $E_{12}\cdot E_{21}=0$ we have 
$$
e^{-E_{12}-E_{21}}=1+ \sum_{k=1}^{2g} \frac{(-1)^k}{k!}\, (E_{12}^k + E_{21}^k)
$$
and so
$\tau_* ( e^{-E_{12}-E_{21}}\cdot \tau^*p_1^*x)$
equals
$$
p_1^*x+ \sum_{k=1}^{2g} \frac{(-1)^k}{k!}\, \tau_*[\tau^*p_1^*x \cdot (E_{12}^k + E_{21}^k)].
$$
We have
$$
\begin{aligned}
 p_{2*}((q\times q)^* e^{c_1(P_B)}\cdot  p_1^*x) &=  
p_{2*}(e^{(q \times q)^*c_1(P_B)} \cdot  p_1^*(q^*z+q^*w\, \eta))\\
& =p_{2*}((q \times q)^*(e^{c_1(P_B)} q_1^*z)+(q \times q)^*(e^{c_1(P_B)}q_1^*w)\, p_1^*\eta)\\
& = 0+q^*q_{2*}(e^{c_1(P_B)}q_1^*w)=  q^*F_B(w) \\
\end{aligned}
$$
by  Lemma \ref{ale:basicinters}. Combining the above with 
Lemma  \ref{lem:basic} we find that 
$$
p_{2*}\tau_*(e^{\tau^*(q\times q)^*c_1(P_B) -E_{12}-E_{21}}\cdot 
\tau^*(p_1^*x))
$$
is the sum of the four terms: the first is $q^*F_B(w)$, the second is 
$$
\begin{aligned}
\sum_{k=2}^{2g} \sum_{m=0}^{k-2} \frac{(-1)^{k+m}}{k!}\, 
\{ p_{2*}[(q\times q)^*[e^{c_1(P_B)} q_1^*[(( (-1)^{k+1}-1)z + &\\
(-1)^{k+1}w  \gamma)  \,  \gamma^{m}]]  \cdot \eta_1] \}\cdot \eta 
\cdot q^*\gamma^{k-2-m} , &\\
\end{aligned}
$$
the third term is
$$
\sum_{k=2}^{2g} \sum_{m=0}^{k-2} \frac{(-1)^{m}}{k!}\, \{ p_{2*}[(q\times q)^*[e^{c_1(P_B)}q_1^*[( z+w \gamma)\,  \gamma^{m}]] \cdot \eta_1]\} \cdot q^* \gamma^{k-1-m}   ,
$$
and finally the fourth is
$$
\sum_{k=2}^{2g} \sum_{m=0}^{k-2} \frac{(-1)^{k+m}}{k!}\, \{ p_{2*}[(q\times q)^*[e^{c_1(P_B)}q_1^*( z  \gamma^{m+1})]]\}\cdot  \eta  \cdot q^* \gamma^{k-2-m} \, . 
$$
By applying now Lemma \ref{ale:basicinters} and by making the substitution  $n=k-2$ we get 
the desired expression.
\end{proof}

\begin{corollary} \label{acor:FTS}
Let $z,\, w $ be  cycles on $B$. Then 
in algebraic equivalence we have 
$$  p_{2*}\tau_*(e^{c_1 (\sigma^* \Pcal _0)} 
\cdot \tau^*(p_1^*(q^*z+q^*w \cdot \eta))\a= \, q^* F_B(w)-q^*F_B(z) \cdot \eta.  
$$
\end{corollary}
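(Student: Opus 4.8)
The plan is to derive Corollary \ref{acor:FTS} from Proposition \ref{aprop:FTq^*a} by passing to algebraic equivalence and using the single relation $\gamma = c_1(J) \a= 0$. Concretely, in algebraic equivalence every term on the right-hand side of Proposition \ref{aprop:FTq^*a} that carries a positive power of $\gamma$ dies, so the expressions for $a$ and $b$ from Theorem \ref{th: aeFT} collapse dramatically.

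First I would look at the formula for $a$. Its leading term is $F_B(w)$, which survives. Every other summand is of the form $F_B[(z+w\cdot c_1(J))\cdot c_1^m(J)]\cdot c_1^{n-m+1}(J)$: the factor $c_1^{n-m+1}(J)$ has exponent $n-m+1 \ge 1$ (since $m \le n$), so each such term is algebraically equivalent to $0$. Hence $a \a= F_B(w)$, and correspondingly $q^*a \a= q^*F_B(w)$. Next I would examine $b$. Each summand is $\frac{(-1)^m}{(n+2)!}F_B[(((-1)^{n+1}-1)z - w\cdot c_1(J))\cdot c_1^m(J)]\cdot c_1^{n-m}(J)$. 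When $n-m \ge 1$ the external factor $c_1^{n-m}(J)$ kills the term; so only $m = n$ contributes in algebraic equivalence. For $m=n$ the bracket is $(((-1)^{n+1}-1)z - w\cdot c_1(J))\cdot c_1^n(J)$, and inside this the piece $w\cdot c_1(J)\cdot c_1^n(J) = w\cdot c_1^{n+1}(J)$ again has a positive power of $\gamma$ and dies, leaving $((-1)^{n+1}-1)z\cdot c_1^n(J)$; this in turn is algebraically trivial unless $n=0$, and for $n=0$ the coefficient $(-1)^{1}-1 = -2$ is nonzero. So the only surviving term of $b$ is the one with $n=m=0$, namely $\frac{(-1)^0}{2!}F_B[(-2)z] = -F_B(z)$. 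Therefore $b \a= -F_B(z)$ and $q^*b\cdot\eta \a= -q^*F_B(z)\cdot\eta$.

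Putting the two computations together gives
$$
p_{2*}\tau_*(e^{c_1(\sigma^*\Pcal_0)}\cdot\tau^*(p_1^*(q^*z+q^*w\cdot\eta))) = q^*a + q^*b\cdot\eta \, \a= \, q^*F_B(w) - q^*F_B(z)\cdot\eta,
$$
which is exactly the assertion of the corollary. I would of course note at the outset, for the reader, that $\gamma \a= 0$ is recorded already in the text, and that algebraic equivalence is compatible with $q^*$, with products, and with pushforward along $p_2$, so the manipulations above are legitimate.

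The main obstacle is essentially bookkeeping rather than any real difficulty: one must be careful that the $F_B(w)$-term in $a$ is not itself proportional to a power of $\gamma$ (it is not), and must check the sign and coefficient of the surviving $b$-term so that one genuinely lands on $-F_B(z)$ and not some other multiple. A secondary point worth a sentence is the observation that $(-1)^{n+1}-1$ vanishes for odd $n$, so even before invoking algebraic triviality only even $n$ could contribute to $b$; combined with the $c_1^{n-m}(J)$ and $c_1^{n+1}(J)$ factors this confirms that $n=0$ is the unique survivor. No new geometry is needed; the corollary is purely a specialization of Proposition \ref{aprop:FTq^*a}.
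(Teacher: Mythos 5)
Your proof is correct and follows essentially the same route as the paper: both arguments simply invoke $c_1(J)\a= 0$ in Proposition \ref{aprop:FTq^*a} to kill every term of $a$ and $b$ except $F_B(w)$ and the $n=m=0$ term $\tfrac{1}{2}F_B(-2z)=-F_B(z)$. Your version just spells out the bookkeeping (which exponents of $\gamma$ are positive, and why only $n=m=0$ survives in $b$) that the paper leaves to the reader.
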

\begin{proof}
Indeed,  since $c_1(J)\a= 0$  it is clear that 
$a \a= \, F_B(w)$ and $b \a= \, -q^*F_B(z)$
since the only non zero term of the sum  corresponds to $m=0,\; n=0$.
\end{proof}

We conclude now with the proof of the basic Theorem \ref{th: aeFT} and Theorem \ref{cor: aeFT}:
\begin{proof}
By Proposition \ref{prop:specialization}  we have 
$ \varphi_0=\sigma_X F(c_{\eta})=
\rho_{2*}(e^{c_1( \Pcal _0)} \cdot \sigma_*(\tau^* p_1^*\gamma))$. By the projection formula
 we have $e^{c_1( \Pcal _0)} \cdot \sigma_*(\tau ^* p_1^*\gamma) = 
\sigma_*(e^{c_1(\sigma^* \Pcal _0)} \cdot \tau ^* p_1^*\gamma)$. Observe now that 
$\rho_2 \circ \sigma = \nu \circ (p_2 \circ \tau ) : \tilde{V} \rightarrow\bar{\PP}$, see the diagram in the proof of Lemma \ref{le:compair}.  The proof then follows  
from Proposition \ref{aprop:FTq^*a} and Corollary \ref{acor:FTS}.
\end{proof}
\end{section}
\begin{section}{Applications}\label{Applications}
Let ${\X} \to S$ be a completed rank-one degeneration as described in \S 2.
According to Beauville \cite{B1} we have a decomposition of 
$CH^i_{\QQ}(X_{\eta})$
into subspaces which are eigenspaces for the action of the integers
on $X_{\eta}$:
$$
A^i_{\QQ}(X_{\eta}) =\oplus_j A^i_{(j)}(X_{\eta})
$$
such that $n^*(x)= n^{2i-j}\, x$ for $x \in A^i(X_{\eta})$.
(Beauville works over ${\CC}$, but his proof does not use more
than the Fourier-Mukai transform which works over the residue
field of $\eta$.)
The multiplication map $n$ acts as multiplication  by $n^{2i}$ on 
homology and therefore all cycles in $A^i_{(j)}(X_{\eta})$ are 
homologically trivial for $j \neq 0$. 
Since under the Fourier-Mukai transform we have 
$F(A^i_{(j)}(X_{\eta}))=A^{g-i+j}_{(j)}(X_{\eta})$, the elements
of $A^i$ that lie in $A^i_{(j)}$ can be characterized by their codimension 
(namely $g-i+j$). 

Suppose now that $c=\sum c^{(j)}\in A^i(X_{\eta})$
with $c^{(j)} \in A^i_{(j)}(X_{\eta})$, where the decomposition corresponds to
$\varphi:=F(c)=\sum \varphi^{(j)}$ with $\varphi^{(j)} \in A^{g-i+j}(X_{\eta})$.

\begin{theorem}
\label{nonzero}
Let $c=c_{\eta}=\sum c^{(j)} \in A^i(X_{\eta})$ with $c^{(j)}\in A^i_{(j)}(X_{\eta})$
such that $\varphi_0^{(j)}\neq 0$, where
$\varphi_0$ is the specialization and $\varphi_0^{(j)}$ the codimension
$g-i+j$-part of $\varphi_0$. Then $c^{(j)} \neq 0$.
\end{theorem}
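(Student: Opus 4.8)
The plan is to argue by contrapositive: if $c^{(j)} = 0$ for some $j$, then the corresponding graded piece $\varphi^{(j)} = F(c^{(j)})$ of $\varphi = F(c)$ vanishes on the generic fibre, and I want to conclude that its specialization $\varphi_0^{(j)}$ vanishes as well. The key observation is that specialization is compatible with the Beauville decomposition in the following weak sense: although the multiplication-by-$n$ map does not extend to $\X$, the grading can be detected purely by \emph{codimension} once we know the Fourier-Mukai transform. Indeed, as recalled just before the statement, an element of $A^i_{\QQ}(X_\eta)$ lies in $A^i_{(j)}$ precisely when its Fourier-Mukai transform is supported in codimension $g-i+j$. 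So the decomposition $\varphi = \sum_j \varphi^{(j)}$ with $\varphi^{(j)} \in A^{g-i+j}(X_\eta)$ is just the decomposition of the cycle $\varphi$ into its pure-codimensional components, and these components are individually the Fourier-Mukai transforms $F(c^{(j)})$.

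First I would fix the codimension decomposition: write $\varphi_\eta = \sum_j \varphi_\eta^{(j)}$ with $\varphi_\eta^{(j)}$ the part of codimension $g-i+j$, so that $\varphi_\eta^{(j)} = F(c^{(j)})$. Since $c^{(j)} = 0$ by hypothesis, we get $\varphi_\eta^{(j)} = 0$. Next I would pass to specializations. The specialization map $\sigma_X \colon A_k(X_\eta) \to A_k(X_0)$ preserves dimension (hence codimension), so $\sigma_X(\varphi_\eta^{(j)})$ is the codimension-$(g-i+j)$ part of $\sigma_X(\varphi_\eta) = \varphi_0$; that is, $\sigma_X(\varphi_\eta^{(j)}) = \varphi_0^{(j)}$. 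But $\varphi_\eta^{(j)} = 0$ forces $\varphi_0^{(j)} = \sigma_X(0) = 0$, contradicting the hypothesis $\varphi_0^{(j)} \neq 0$. Therefore $c^{(j)} \neq 0$.

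The one point that needs care — and which I expect to be the main (mild) obstacle — is justifying that $\sigma_X$ genuinely takes the codimension-$(g-i+j)$ component of $\varphi_\eta$ to the codimension-$(g-i+j)$ component of $\varphi_0$, i.e. that specialization commutes with the decomposition by codimension. This is immediate once one notes that $\sigma_X$ is defined degree-by-degree on the graded pieces $A_k(X_\eta)$ and that $\X \to S$ has equidimensional fibres of dimension $g$, so a cycle of pure codimension $d$ on $X_\eta$ specializes to a cycle of pure codimension $d$ on $X_0$; summing over $j$ and matching codimensions gives the claim. Everything else is formal: the identity $\varphi_\eta^{(j)} = F(c^{(j)})$ is exactly the characterization of the Beauville grading via Fourier-Mukai and codimension recalled in the paragraph preceding the theorem, and the vanishing $\sigma_X(0) = 0$ is trivial. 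Thus the theorem follows at once from Proposition \ref{prop:specialization} together with the Fourier-Mukai characterization of the Beauville components.
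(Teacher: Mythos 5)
Your proof is correct and is essentially identical to the paper's: both argue by contrapositive, using that the Beauville components of $\varphi=F(c)$ are exactly its pure-codimension pieces and that the specialization map preserves codimension, so $c^{(j)}=0$ forces $\varphi_0^{(j)}=0$. The extra care you take in justifying that specialization commutes with the decomposition by codimension is a reasonable elaboration of what the paper states in one line.
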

\begin{proof}
The specialization map preserves the codimension of cycles. 
Therefore, if $c^{(j)}=0$ then $\varphi^{(j)}=0$, hence $\varphi_0^{(j)}=0$ 
and this contradicts our assumption.
\end{proof}

This theorem, which holds as well for cycles modulo algebraic equivalence,  
can be used to prove non-vanishing results for cycles. 
For the rest of this section 
we work modulo algebraic equivalence.  
For example, consider a threefold $\Z /S$ such that $Z_{\eta}$ is a smooth 
cubic threefold and $Z_0$ is a generic nodal cubic threefold. The genericity
assumption means that the corresponding canonical 
genus $4$ curve $C$ in ${\PP}^3$ 
which is used to construct the Fano threefold, see e.g.\ \cite{GK} Section 2, 
is a generic curve and hence we may assume by Ceresa's result \cite{C} 
that the class   $C^{(1)}$ does not vanish in the Jacobian $B$ of the 
curve $C$. Since $C$ is a trigonal curve we have by \cite{CvG} 
that $C^{(j)}\a= \,  0$  for $j \geq 2$. Hence the Beauville decomposition of 
$C$ is $[C] \a= \,  C^{(0)}+C^{(1)}$ with $F_B(C^{(0)}) \in A_{(0)}^1(B)$ and 
 $F_B(C^{(1)} )\in A_{(1)}^2(B)$.
 
The Picard variety $\X/S$ of $\Z$ defines a principally polarized  
semi-abelian variety with central fibre a rank-one extension of the Jacobian $B$ of the curve $C$, 
see \cite{GK}, Corollary 6.3 and Section 10.  The principal polarization on $X_{\eta}$ is induced by a geometrically defined divisor $\Theta$. Let  $\Sigma$ be the Fano surface of 
lines in $Z_{\eta}$. If $s \in \Sigma$ we  denote by $l_s$ the corresponding 
line in $Z_{\eta}$.  For each  $s \in S$ we have the divisor 
$$
D_s=\{ s'\in S, \; l_{s'} \cap l_s\neq \emptyset \}
$$ 
on $S$ as defined in \cite{CG}. We then  have a natural map
$$
\Sigma \to {\rm Pic}^0(\Sigma), \qquad s \mapsto D_s-D_{s_0},
$$
with $s_0\in \Sigma$ a base point. It is well known that the cohomology class  of 
$\Sigma$ in ${\rm Pic}^0(\Sigma)$ is equal to that of the cycle  ${\Theta ^3/ 3!}$, see \cite{CG}.   
By \cite{B1}, Propositions 3 and 4, we have that 
$A^3_{(j)}(X_{\eta})=0$ for $j <0$ and  $A^5_{(j)}(X_{\eta})=0$ for 
$j \neq 0$ in algebraic equivalence.
We have therefore the decomposition 
$$
[\Sigma]\a= \, \Sigma^{(0)}+\Sigma^{(1)}+\Sigma^{(2)} \qquad
\text{\rm with $\Sigma^{(j)} \in A^3_{(j)}$}.
$$ 
Indeed, $\Sigma ^{(j)} \in A^{3}_{(j)}(X_{\eta})$,
hence $F(\Sigma^{(j)})\in A^{2+j}_{(j)}(X_{\eta})$ 
which is zero for $j \geq 3$. Now we show that 
$\Sigma ^{(1)} \na= \,  0$, and we thus obtain a cycle 
which is homologically but not algebraically  
equivalent to zero. Since $\Theta  \in A^1_{(0)}(X_{\eta})$ this implies  that 
$\Sigma $ is homologically, but not algebraically   
equivalent to ${\Theta ^3 / 3!}$ .  

We denote by $\Xcal$ the completed rank one degeneration of 
$X_{\eta}$. The class $[\Sigma]$ degenerates to 
a cycle $[\Sigma_0]=\nu_*(\gamma )$ on the central fiber $X_0$ of class 
$$
\gamma \a= \,  q^*[C]+\frac{1}{2} \,q^*[C \ast C] \cdot \eta,
$$
where $C\ast C$ is the Pontryagin product, see \cite{GK}, 
Propositions 10.1 and 8.1.
In order to see that $\Sigma ^{(1)} \na= \,  0$ 
it suffices by Theorem \ref{nonzero} to show that $\varphi_0^{(1)} \na= \,  0$
with $\varphi_0$ the limit of the Fourier-Mukai transform.
By Theorem \ref{cor: aeFT}, we have
$$
\varphi_0\a= \, \nu_*(\frac{1}{2} \,q^*[F_B(C) \cdot F_B(C)]-q^*F_B(C) \cdot \eta),
$$
hence 
$$
\varphi_0^{(1)}\a= \, \nu_*(q^*[F_B(C^{(0)}) \cdot F_B(C^{(1)})]-q^*F_B(C^{(1)})\cdot \eta).
$$ 
Since $C^{(1)} \na= \,  0$  we conclude that  $\varphi_0^{(1)}\na= \,  0$,
and this implies the result.

By using the specialization of the Fourier-Mukai transform we can 
deduce the specialization of the Beauville decomposition. 
We do this working modulo algebraic equivalence.
 
\begin{proposition} \label{prop: aelimitBeauville}
Let $c=c_{\eta} \in A^i(X_{\eta})$ with specialization 
$c_0=\nu_*(q^*z+q^*w \cdot \eta )$, where $z \in A^i(B)$ and  
$w \in A^{i-1}(B)$. 
Let $c=  \sum c^{(j)}$ with $c^{(j)}\in A^i_{(j)}(X_{\eta})$,
and let $z=\sum z^{(j)}$ with $z^{(j)} \in A^i_{(j)}(B)$ and 
$w=\sum w^{(j)}$ with $w^{(j)} \in A^{i-1}_{(j)}(B)$  
be the Beauville decompositions. 
If $c_0^{(j)} $ is the specialization of $c^{(j)}$, then
$$  
c_0^{(j)}\a= \; \nu_*( q^*z^{(j)} +q^*w^{(j)} \cdot \eta )\,. 
$$
\end{proposition}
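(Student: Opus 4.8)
The plan is to peel off the $q^{*}z$--part and the $q^{*}w\cdot\eta$--part of the specialization separately, in each case combining Theorem~\ref{cor: aeFT} with the behaviour of the specialization map on the open semi-abelian locus.

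\emph{Reduction.} Since $X_{\eta}$ is a $g$-dimensional principally polarized abelian variety one has $F\bigl(A^{i}_{(j)}(X_{\eta})\bigr)=A^{g-i+j}_{(j)}(X_{\eta})$, so writing $c=c_{\eta}=\sum_{j}c^{(j)}$ gives $\varphi_{\eta}=F(c_{\eta})=\sum_{j}F(c^{(j)})$ with the summands $F(c^{(j)})\in A_{i-j}(X_{\eta})$ lying in pairwise distinct dimensions. The specialization map preserves the dimension of a cycle, so $\varphi_{0}=\sum_{j}\sigma_{X}\bigl(F(c^{(j)})\bigr)$ is precisely the decomposition of $\varphi_{0}$ by dimension, and $\sigma_{X}(F(c^{(j)}))$ is its dimension-$(i-j)$ piece. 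Applying Theorem~\ref{cor: aeFT} to $c_{\eta}$ and decomposing $\varphi_{0}\a=\nu_{*}\bigl(q^{*}F_B(w)-q^{*}F_B(z)\cdot\eta\bigr)$ by dimension (the operations $\nu_{*}$, $q^{*}$ and $(-\cdot\eta)$ are graded, and $F_B(w^{(j)})\in A^{g-i+j}_{(j)}(B)$, $F_B(z^{(j)})\in A^{g-1-i+j}_{(j)}(B)$) yields
\[
\sigma_{X}\bigl(F(c^{(j)})\bigr)\;\a=\;\nu_{*}\bigl(q^{*}F_B(w^{(j)})-q^{*}F_B(z^{(j)})\cdot\eta\bigr)\,. \qquad (\ast)
\]
It therefore remains to produce a representative $\sigma_{X}(c^{(j)})=\nu_{*}\bigl(q^{*}z'_{j}+q^{*}w'_{j}\cdot\eta\bigr)$ (which exists by Lemma~\ref{le:onto}, after the usual Moving-Lemma adjustment) and to prove $z'_{j}\a=z^{(j)}$ and $w'_{j}\a=w^{(j)}$.

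\emph{The $z$-part.} Let $\mathcal{X}^{\star}\subset\mathcal{X}$ be the open semi-abelian part; its special fibre $X_{0}^{\star}$ is the $\GG_{m}$-bundle over $B$, and $\nu$ restricts to an isomorphism $\PP\setminus(\PP_{1}\cup\PP_{2})\xrightarrow{\ \sim\ }X_{0}^{\star}$. Multiplication $[n]\colon\mathcal{X}^{\star}\to\mathcal{X}^{\star}$ is a finite flat $S$-morphism covering $[n]_{B}$ on the abelian quotient, so by \cite{F}, Proposition~20.3, specialization commutes with $[n]^{*}$; moreover the specialization map of $\mathcal{X}^{\star}$ is the composition of $\sigma_{X}$ with restriction to $X_{0}^{\star}$. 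Finally $q^{*}\colon A^{\bullet}_{\QQ}(B)\to A^{\bullet}_{\QQ}(X_{0}^{\star})$ is surjective with kernel $c_{1}(J)\cdot A^{\bullet}_{\QQ}(B)$ and intertwines $[n]_{B}^{*}$ with $[n]^{*}$; since $c_{1}(J)\a=0$ it induces an isomorphism $A^{\bullet}_{\QQ}(B)/\!\a=\ \xrightarrow{\ \sim\ }A^{\bullet}_{\QQ}(X_{0}^{\star})/\!\a=$ compatible with the $[n]$-actions. Hence the composite
\[
\rho\colon A^{i}_{\QQ}(X_{\eta})\xrightarrow{\ \sigma_{X}\ }A^{i}_{\QQ}(X_{0})\longrightarrow A^{i}_{\QQ}(X_{0}^{\star})/\!\a=\ \cong A^{i}_{\QQ}(B)/\!\a=
\]
intertwines $[n]^{*}$ on $X_{\eta}$ with $[n]_{B}^{*}$ on $B$, so it carries $A^{i}_{(j)}(X_{\eta})$ into $A^{i}_{(j)}(B)_{\QQ}/\!\a=$. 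Because $\nu$ is an isomorphism over $X_{0}^{\star}$ and the $\eta$-term restricts to zero there, $\rho(c_{\eta})=z$ and $\rho(c^{(j)})=z'_{j}$ modulo algebraic equivalence; summing over $j$ and using uniqueness of the Beauville decomposition of $z=\sum_{j}z^{(j)}$ forces $z'_{j}\a=z^{(j)}$.

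\emph{The $w$-part, and conclusion.} Apply Theorem~\ref{cor: aeFT} now to $c^{(j)}$ itself: since $\sigma_{X}(c^{(j)})=\nu_{*}(q^{*}z'_{j}+q^{*}w'_{j}\cdot\eta)$,
\[
\sigma_{X}\bigl(F(c^{(j)})\bigr)\;\a=\;\nu_{*}\bigl(q^{*}F_B(w'_{j})-q^{*}F_B(z'_{j})\cdot\eta\bigr)\;\a=\;\nu_{*}\bigl(q^{*}F_B(w'_{j})-q^{*}F_B(z^{(j)})\cdot\eta\bigr),
\]
using $z'_{j}\a=z^{(j)}$ in the last step. Comparing with $(\ast)$ gives $\nu_{*}\bigl(q^{*}(F_B(w'_{j})-F_B(w^{(j)}))\bigr)\a=0$; restricting again to $X_{0}^{\star}$, on which a class pulled back from $B$ survives modulo algebraic equivalence (again because $c_{1}(J)\a=0$), yields $F_B(w'_{j})\a=F_B(w^{(j)})$, and applying $F_B$ once more together with $F_B\circ F_B=(-1)^{g-1}(-1_{B})^{*}$ (invertible over $\QQ$) gives $w'_{j}\a=w^{(j)}$. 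Therefore $\sigma_{X}(c^{(j)})=\nu_{*}(q^{*}z'_{j}+q^{*}w'_{j}\cdot\eta)\a=\nu_{*}(q^{*}z^{(j)}+q^{*}w^{(j)}\cdot\eta)$, which is the assertion. The delicate point is the middle paragraph: one has to check that $[n]$ on $\mathcal{X}^{\star}$ is finite flat over $S$, that the specialization of $\mathcal{X}^{\star}$ is compatible via open restriction with that of $\mathcal{X}$, and that passing to the $\GG_{m}$-bundle and then modulo algebraic equivalence is $[n]^{*}$-equivariant — all of which rest on $c_{1}(J)$ being algebraically trivial. Everything else is bookkeeping with the Fourier–Mukai transform and the Beauville grading.
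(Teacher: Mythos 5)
Your proof is correct, but it takes a genuinely different route from the paper's. The paper simply recalls from Beauville's construction that $c^{(j)}=(-1)^gF((-1)^*\varphi^{(j)})$, notes that the inversion extends to $X_0$ (it preserves the cell decomposition of the toroidal compactification), and applies Theorem~\ref{cor: aeFT} a second time --- in the backward direction, to $(-1)^*\varphi_0^{(j)}$ --- so the whole proof reduces to a short sign computation. You share the first step (reading off $\sigma_X(F(c^{(j)}))$ as the dimension-$(i-j)$ graded piece of $\varphi_0$ via Theorem~\ref{cor: aeFT}), but you avoid inverting the transform on $X_0$: instead you determine the $q^*z$-component of $\sigma_X(c^{(j)})$ by an eigenvalue argument on the open semi-abelian locus, and then recover the $q^*w\cdot\eta$-component by a second forward application of Theorem~\ref{cor: aeFT} to $c^{(j)}$ and comparison. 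Your route uses only that the $c^{(j)}$ are $[n]^*$-eigenclasses, not Beauville's explicit formula for them; the price is your middle paragraph, which, as you say, is the delicate point. Two corrections/clarifications are needed there: (i) $[n]\colon\Xs\to\Xs$ is \emph{not} finite over $S$ --- the order of its kernel drops from $n^{2g}$ on the generic fibre to $n^{2g-1}$ on the special fibre, part of the torsion escaping into the boundary of $\Xcal$ --- but it is quasi-finite between regular schemes of the same dimension, hence flat by miracle flatness, and flatness is all that is required to invoke Fulton, Proposition 20.3; (ii) the identification of $A^\bullet_{\QQ}(X_0^\star)$ modulo algebraic equivalence with $A^\bullet_{\QQ}(B)$ modulo algebraic equivalence rests on the localization sequence for algebraic equivalence together with $c_1(J)\a= 0$, and this should be said. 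With those points made explicit the argument is complete.
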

\begin{proof} 
By the proof of the main theorem in \cite{B1}, the 
component $c^{(j)}$ is defined as 
$(-1)^g F((-1)^*\phi^{(j)})$ with
$\phi^{(j)} \in A^{g-i+j}(X_{\eta})$ (notation as above).  
The inversion on $X_{\eta}$ leaves 
the cell decomposition of the toroidal compactification 
invariant and hence extends naturally to $X_0$.  
So  $c_0^{(j)}$ equals 
$(-1)^g F((-1)^*\phi_0^{(j)})$ with $\phi_0^{(j)} \in A^{g-i+j}(X_0)$. 
Therefore, by Theorem \ref{cor: aeFT}, 
we have 
 $$ 
 \begin{aligned} 
 c_0^{(j)} & \a= \; (-1)^gF((-1)^*\nu_*(q^*F_B(w^{(j)}) - q^*F_B(z^{(j)})\cdot \eta)) \\
 &\a= \; (-1)^{g+j} (-1)^{g-1+j} \nu_*(-q^*z^{(j)}  - q^*w^{(j)}\cdot\eta)=\nu_*( q^*z^{(j)} +q^*w^{(j)}
 \cdot \eta )\,.
 \end{aligned}
 $$
\end{proof}
For example, let $\C \to S$ be a genus $g$ curve with $C_{\eta}$ a smooth curve and $C_0$ a one-nodal curve with normalization $\tilde{C}_0$. Let $p$ be the node of $C_0$ and $x_1,\; x_2$ the points of $\tilde{C}_0$ lying 
over $p$.
 The compactified Jacobian $\X= \overline{P_{\C / S}} $ is then a complete rank one degeneration with central  fiber the $\PP ^1 $-bundle over ${\rm Pic}^0(\tilde{C}_0)$ associated to the line bundle $J=O(x_1-x_2)$. 
 Let $\bar{u}: \C \to \X $ be the compactified Abel-Jacobi map and let $c_{\eta}= [\bar{u}(C_{\eta})]$.
  The  cycle $c_{\eta}$ specializes then to the cycle $c_0=[\bar{u}(C_0)]$ with  
 $c_0 \a= \, \nu_*( q^*[{\rm pt}] + q^* \tilde{c}_0 \cdot \eta)$,  where $[{\rm pt}]$ is the class of a point and $\tilde{c}_0$ is  the class of the Abel-Jacobi image of the smooth curve $\tilde{C}_0$ in  
 ${\rm Pic}^0 (\tilde{C}_0)$, see e.g. \cite{GK}, Proposition 7.1. By Proposition \ref{prop: aelimitBeauville} we have then
 $$c_0^{(j)}\, \a= \; \begin{cases}  \; q^* \tilde{c}^{(j)}_0 \cdot \eta, \;\;  j \neq 0\, , \\ 
 \; q^* [{\rm pt}] +  q^* \tilde{c}^{(0)}_0 \cdot \eta, \;\; j=0\, . \end{cases}
  $$  
{\bf Acknowledgement} The second author thanks the Korteweg-de Vries 
Instituut van de Universiteit van Amsterdam,  where part of this work was 
done, for its support and hospitality.
\end{section}

 \end{document}